\numberwithin{equation}{section}
\newtheorem{thm}{Theorem}
\newtheorem{prop}{Proposition}
\newtheorem{lem}{Lemma}
\newtheorem{cor}{Corollary}
\theoremstyle{definition}
\newtheorem{dfn}{Definition}
\newtheorem*{rmk}{Remark}
\newtheorem{conj}{Conjecture}
\newtheorem*{notation}{Notation}
\thanks{}
\begin{document}

\title{Connection Blocking in $\textrm{SL}(n,\mathds{R})$ Quotients}
\author[R. Bidar]{Mohammadreza Bidar}
\address{Department of Mathematics\\
                 Michigan State University\\
                 East Lansing, MI 48824}

\date{\today}

\begin{abstract}
Let $G$ be a connected Lie group and $\Gamma \subset G$ a lattice. Connection curves of the homogeneous space $M=G/\Gamma$ are the orbits of one parameter subgroups of $G$. To \textit{block} a pair of points $m_1,m_2 \in M$ is to find a \textit{finite} set $B \subset M\setminus \{m_1, m_2 \}$ such that every connecting curve joining $m_1$ and $m_2$ intersects $B$. The homogeneous space $M$ is \textit{blockable} if every pair of points in $M$ can be blocked.
\par
In this paper we investigate blocking properties of $M_n= \textrm{SL}(n,\mathds{R})/\Gamma$, where $\Gamma=\textrm{SL}(n,\mathds{Z})$ is the integer lattice. We focus on $M_2$ and show that the set of bloackable pairs is a dense subset of $M_2 \times M_2$, and we conclude manifolds $M_n$ are not blockable. Finally, we review a quaternionic structure of $\textrm{SL}(2,\mathds{R})$ and a way for making co-compact lattices in this context. We show that the obtained quotient homogeneous spaces are not finitely blockable. 
\end{abstract}

\maketitle

%\tableofcontents

\section{Introduction}

Finite blocking is an interesting concept originating as a problem in billiard dynamics and later in the context of Riemannian manifolds. Let $(M,g)$ be a complete connected, infinitely differentiable Riemannian manifold. For a pair of (not necessarily distinct) points $m_1,m_2 \in M$ let $\Gamma(m_1,m_2)$ be the set of geodesic segments joining these points. A set $B \subset M\setminus \{m_1,m_2\}$ is \textit{blocking} if every $\gamma \in \Gamma(m_1,m_2)$ intersects $B$. The pair $m_1,m_2$ is secure if there is a \textit{finite blocking} set $B=B(m_1,m_2)$. A manifold is secure if all pairs of points are secure. If there is a uniform bound on the cardinalities of blocking sets, the manifold is \textit{uniformly secure} and the best possible bound is the \textit{blocking number}. 
\par
Now, the first question naturally arising is what Riemannian manifolds are secure. If we focus on closed Riemannian manifolds, there is the following conjecture \cite{Growth of the number of geodesics,Blocking light}:
\newline
\begin{conj} A closed Riemannian manifold is secure if and only if it is flat.
\end{conj}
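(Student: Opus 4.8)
The plan is to treat the two implications separately, since they are of entirely different character.

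The easy direction, flat $\Rightarrow$ secure, I would prove by a midpoint argument on the universal cover. By Bieberbach's theorem a compact flat manifold is $M = \bbR^n/\Lambda$ for a discrete group $\Lambda$ of isometries of $\bbR^n$. Fix $m_1, m_2 \in M$ and a lift $\tilde m_1$; then the connecting geodesics from $m_1$ to $m_2$ are precisely the projections of the straight segments $[\tilde m_1, \lambda \tilde m_2]$ as $\lambda$ ranges over $\Lambda$. Each such segment passes through its Euclidean midpoint, and the crucial point is that the set of midpoints $\{\tfrac12(\tilde m_1 + \lambda \tilde m_2) : \lambda \in \Lambda\}$, read modulo $\Lambda$, is \emph{finite}: on a torus $\bbR^n/\bbZ^n$ consecutive midpoints differ by elements of $\tfrac12 \bbZ^n$, so modulo $\bbZ^n$ there are at most $2^n$ of them, and the general Bieberbach case reduces to this after passing to a finite-index translation subgroup. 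Discarding the finitely many midpoints that happen to project onto $m_1$ or $m_2$ leaves a genuine finite blocking set, so $M$ is secure, in fact uniformly secure. I expect this step to be routine.

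The reverse implication, secure $\Rightarrow$ flat, is the real content and the main obstacle. My strategy would route through entropy and geodesic counting. First I would use the fact underlying the growth-of-geodesics circle of ideas cited above: a uniform finite blocking configuration cannot intercept super-polynomially many homotopically distinct connecting segments, so uniform security forces the counting function $n_T(m_1,m_2)$ of connecting geodesics of length at most $T$ to grow only polynomially in $T$. Second, I would invoke Ma\~n\'e's formula, which identifies the topological entropy $h_{\mathrm{top}}$ of the geodesic flow with the exponential growth rate of the average of $n_T$ over $M \times M$; polynomial growth then forces $h_{\mathrm{top}} = 0$.

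The genuine difficulty is the last link: passing from $h_{\mathrm{top}} = 0$ (equivalently, from security) to flatness. Zero entropy by itself does \emph{not} single out flat metrics, so extra rigidity must be extracted, and this is precisely where the conjecture remains open in general. The natural attempt is to exploit security more strongly than through entropy alone: a finite blocking set for \emph{every} pair is a far more rigid constraint, and one hopes to show that it forces the absence of conjugate points and thereafter parallel curvature, at which point flatness would follow by the methods already available in the no-conjugate-point and locally symmetric settings. Closing this final implication unconditionally is the crux, and I do not expect to settle it here; the present paper instead contributes negative (insecurity and non-blockability) results for specific $\mathrm{SL}$-quotients, which are consistent with the conjecture and help delineate where ``non-flat forces insecure'' must hold.
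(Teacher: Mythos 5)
This statement is Conjecture 1 of the paper, quoted from the literature on Riemannian security; the paper offers no proof of it and does not claim one --- it only records the special cases in which it has been verified (manifolds without conjugate points, compact locally symmetric spaces, surfaces of genus at least one, genericity of insecurity) and then contributes further evidence in the homogeneous-space setting. So there is no ``paper's own proof'' to compare against, and no complete proof of the statement exists in the literature.

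Your forward direction (flat $\Rightarrow$ secure) is essentially correct and matches the standard argument: midpoints of connecting geodesics on a torus $\bbR^n/\bbZ^n$ lie in $\tfrac12\bbZ^n$ modulo $\bbZ^n$ up to translation, giving uniform (indeed midpoint) security, and the general flat case follows via Bieberbach and the fact that security passes between finite covers. This is exactly the content of the references the paper cites for flat manifolds being uniformly and midpoint secure. One small caution: when a midpoint coincides with $m_1$ or $m_2$ you cannot simply discard it --- you must block that geodesic at a different interior point --- but this is easily repaired and does not affect uniformity. Your reverse direction, however, contains the genuine and unavoidable gap, which you yourself identify: polynomial growth of the geodesic counting function yields $h_{\mathrm{top}}=0$ via Ma\~n\'e's formula, but zero topological entropy does not imply flatness (nor even absence of conjugate points), and no argument is currently known that extracts flatness from security in full generality. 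Since the statement is an open conjecture, the honest conclusion is that your write-up is a correct account of what is known and where the obstruction lies, not a proof; it should not be presented as one.
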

\par 
Flat manifolds are uniformly secure, and the blocking number depends only on their dimension \cite{Connecting geodesics and security, Blocking of billiard}. They are also \textit{midpoint secure}, i.e., the midpoints of connecting geodesics yield a finite blocking set for any pair of points \cite{Connecting geodesics and security, Insecurity for compact,Blocking of billiard}. Conjecture 1. says that flat manifolds are the only secure manifolds. This has been verified for several special cases: A manifold without conjugate points is uniformly secure if and only if it is flat \cite{Growth of the number of geodesics,Blocking light}; a compact locally symmetric space is secure if and only if it is flat \cite{Connecting geodesics and security}; the generic manifold is insecure \cite{dense G-delta,Real Analytic metrics,Insecurity is generic}; Conjecture 1. holds for compact Riemannian surfaces with genus bigger or equal than 1 \cite{Insecurity for compact}; any Riemannian metric has an arbitrarily close, insecure metric in the same conformal class \cite{Insecurity is generic}.  
\par
Gutkin \cite{Connection blocking} initiated the study of blocking properties of homogeneous spaces. Here, connection curves are the orbits of one-parameter subgroups of $G$. In this context, he speaks of \textit{finite blocking} instead of security; the counterpart of "secure" in this context is the term \textit{connection blockable}, or simply \textit{blockable}. A counterpart of Conjecture 1 for homogeneous spaces is as follows:
\newline
\begin{conj} 
Let $M=G/\Gamma$ where where $G$ is a connected Lie group and $\Gamma \subset G$ is a lattice. Then $M$ is blockable if and only if $G=\mathds{R}^n$, i.e., $M$ is a torus.
\end{conj}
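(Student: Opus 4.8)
The plan is to treat the two directions of the biconditional separately, since they are of completely different difficulty.

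\textbf{The easy direction} $G=\bbR^n\Rightarrow M$ blockable I would prove directly by \emph{midpoint blocking}, mirroring the flat Riemannian case recalled in the introduction. When $G=\bbR^n$ the one-parameter subgroups are exactly the lines through the origin, so a connecting curve between $m_1$ and $m_2$ lifts to a straight segment in $\bbR^n$ from a fixed lift $\tilde m_1$ to a translate $\tilde m_2+\lambda$ with $\lambda\in\Gamma$. Its midpoint is $\tfrac12(\tilde m_1+\tilde m_2+\lambda)$, and two such midpoints agree in $M=\bbR^n/\Gamma$ iff $\lambda-\lambda'\in 2\Gamma$; hence they take at most $|\Gamma/2\Gamma|=2^n$ values, whose images form a finite blocking set. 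This recovers the classical fact that flat tori are uniformly, indeed midpoint, secure.

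\textbf{Reformulating the converse} $M$ blockable $\Rightarrow G=\bbR^n$, I would first recast blocking in group terms. Writing $m_i=g_i\Gamma$, a connecting curve from $m_1$ to $m_2$ is an orbit $t\mapsto\exp(tX)g_1\Gamma$ reaching $m_2$; such a curve exists precisely when $g_2\gamma g_1^{-1}=\exp(X)$ lies in the image of $\exp$ for some $\gamma\in\Gamma$, and then its interior points are $\exp(sX)g_1\Gamma$ with $0<s<1$ (note $s=0$ gives $m_1$ and $s=1$ gives $m_2$). A point $b\in M$ blocks this curve exactly when it equals one of these interior points. Since $M$ is blockable only if \emph{every} pair is, it suffices to exhibit, whenever $G\neq\bbR^n$, a single pair whose family of connecting curves cannot be met by any finite set.

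\textbf{Strategy for the converse.} To handle a general connected Lie group I would reduce to building blocks via the Levi decomposition $G=R\rtimes S$ (solvable radical and semisimple factor) and the classification of connected abelian groups as $\bbR^a\times(\bbR/\bbZ)^b$. If $G\neq\bbR^n$, then either $S$ is nontrivial, or $G$ is abelian with a compact factor, or $R$ is non-abelian. In the semisimple case---represented by $M_n=\SL(n,\bbR)/\SL(n,\bbZ)$ and the co-compact quaternionic quotients proved later---the plan is to fix one pair (for instance a carefully chosen point paired with itself) and study the midpoint-type interior points $\exp(sX_\gamma)g_1\Gamma$ of its connecting curves, where $\exp(X_\gamma)=g_2\gamma g_1^{-1}$. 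Because $\exp$ is far from injective and $\Gamma$ contains infinitely many hyperbolic elements, these sets are infinite; unlike the torus, where square roots collapse into finitely many cosets, mixing of the one-parameter flow (Moore ergodicity for $\bbR$-diagonalizable subgroups, Ratner equidistribution for unipotent ones) should force the interior points to spread densely, so that no finite set meets all the curves. A non-abelian nilpotent radical would be treated analogously, using polynomial orbit divergence in place of geodesic straightness.

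\textbf{Main obstacle.} Two difficulties dominate. First, the reduction: one must control connecting curves that move simultaneously through all factors of $R\rtimes S$ and rule out blockability for \emph{every} non-torus $G$ and \emph{every} lattice $\Gamma$, whereas the sharp equidistribution inputs are available mainly for arithmetic lattices such as $\SL(n,\bbZ)$. Second, the construction must be compatible with the \emph{positive} phenomenon proved here that blockable pairs are dense in $M_2\times M_2$: non-blockability can hold only on a special, topologically meagre set of pairs, so the unblockable pair has to be selected with care rather than generically. Reconciling this tension uniformly over all connected Lie groups and all lattices is precisely why the conjecture remains open, and why the present paper establishes it only for the families $M_n$ and the quaternionic quotients.
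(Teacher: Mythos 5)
The statement you are addressing is Conjecture~2, and the paper offers no proof of it: it is stated as an open problem, with the paper itself only verifying particular instances (nilmanifolds in Gutkin's earlier work, and here the spaces $\mathrm{SL}(n,\mathbb{R})/\mathrm{SL}(n,\mathbb{Z})$ and certain cocompact quaternionic quotients). So there is no ``paper proof'' to compare against, and the relevant question is whether your proposal actually proves the conjecture. It does not. Your easy direction is fine: the midpoint argument for $\mathbb{R}^n/\Gamma$, with the $|\Gamma/2\Gamma|=2^n$ count, is the standard and correct proof that tori are (uniformly, midpoint) blockable, modulo the routine caveat that a midpoint coinciding with $m_1$ or $m_2$ must be excluded from the blocking set and that curve handled separately. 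But the converse is only a strategy outline: the Levi decomposition reduction is never carried out (connecting curves in $R\rtimes S$ do not split into factor curves, so Proposition~\ref{prop:genblock}(iii) does not apply), the appeal to Moore ergodicity/Ratner equidistribution is a heuristic rather than an argument (equidistribution of orbits does not by itself show that the specific countable family of interior points $\{\exp(t_X X)\cdot m_1\}$ cannot be confined to finitely many $\Gamma$-cosets, which is the actual criterion of Proposition~\ref{prop:fcosets}), and the case of a general lattice in a general non-abelian $G$ is not addressed at all. You candidly acknowledge this in your final paragraph, which is the correct assessment: the converse direction is open, and your text is a research plan, not a proof.

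One factual correction: you write that the paper proves blockable pairs are dense in $M_2\times M_2$ and that non-blockability therefore lives on a ``topologically meagre'' set. Theorem~\ref{thm-main} asserts the opposite: the set of \emph{non-blockable} pairs (those with $g_1^{-1}g_2\in\mathrm{SL}(2,\mathbb{Q})$) is dense in $M_2\times M_2$. (The abstract's phrase ``blockable pairs'' is a typo, contradicted by the theorem it summarizes.) The paper proves nothing about density of blockable pairs, so the ``tension'' you describe between a positive density result and the search for non-blockable pairs is not present; if anything, the mechanism of the paper suggests non-blockable pairs are abundant rather than exceptional. This does not rescue the converse direction, but it does mean your stated obstacle is based on a misreading of what is actually established.
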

\par
Gutkin in \cite{Connection blocking} establishes Conjecture 2 for nilmanifolds. He then proves the homogeneous space $\textrm{SL}(n,\mathds{R})/\textrm{SL}(n,\mathds{Z})$ is not \textit{midpoint blockable}. We continue his work by
proving that these spaces are not blockable. Specifically we prove:

\begin{thm}\label{thm-main}
Let $M_n= \textrm{SL}(n,\mathds{R})/\Gamma,\  \Gamma=\textrm{SL}(n,\mathds{Z})$. Two elements $m_1=g_1\Gamma$ and $m_2=g_2\Gamma \in M_2$ are not finitely blockable from each other if $g_1^{-1}g_2 \in \textrm{SL}(2,\mathds{Q})$. In particular, the set of non-blockable pairs is a dense subset of $M_2\times M_2$. 
\end{thm}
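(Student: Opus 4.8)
The plan is to reduce to a normalized pair by homogeneity, describe the connecting curves algebraically, and then show by an explicit construction that no finite set can meet all of them.

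\emph{Reduction and setup.} The left action of $G=\textrm{SL}(2,\mathds{R})$ on $M_2=G/\Gamma$ permutes connection curves (left translation by $h$ carries the orbit of $\{u_t\}$ to the orbit of $\{hu_th^{-1}\}$) and sends finite blocking sets to finite blocking sets, so $(m_1,m_2)$ is blockable exactly when $(hm_1,hm_2)$ is, for every $h\in G$. Translating by $g_1^{-1}$, I may assume $m_1=e\Gamma=:o$ and $m_2=q\Gamma$ with $q=g_1^{-1}g_2\in\textrm{SL}(2,\mathds{Q})$. A one-parameter subgroup $\{\exp(tX)\}$ produces a curve through $o$ that meets $m_2$ precisely when $\exp(t_0X)\in q\Gamma$ for some $t_0>0$; after rescaling $X$ this shows the connecting curves are indexed by the elements $A\in q\Gamma$ lying in the image of $\exp$, the hyperbolic ones $A=q\gamma$ (trace $>2$, hence positive eigenvalues and a unique real logarithm) giving the segment $c_A=\{A^s\Gamma:0\le s\le1\}$ with $A^s=\exp(s\log A)$. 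Since $q\in\textrm{SL}(2,\mathds{Q})$ lies in the commensurator of $\Gamma$ in $G$, the coset $q\Gamma$ is an infinite set of rational matrices containing infinitely many hyperbolic elements, and, in the spirit of Gutkin's midpoint computation, infinitely many of the $c_A$ are geometrically distinct.

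\emph{Strategy.} It suffices to prove that for every finite $B\subset M_2\setminus\{o,m_2\}$ there is a hyperbolic $A\in q\Gamma$ with $c_A\cap B=\emptyset$. Identifying $M_2$ with the space of unimodular lattices in $\mathds{R}^2$ via $g\Gamma\mapsto g\mathds{Z}^2$, this amounts to joining the lattices $\mathds{Z}^2$ and $q\mathds{Z}^2$ by a path $s\mapsto A^s\mathds{Z}^2$ avoiding the finitely many lattices named by $B$. I emphasize that a crude counting argument does not suffice: a single point of $M_2$ can lie on infinitely many of the $c_A$, because the one-parameter orbit through one of its lifts can re-enter the $\Gamma$-orbit of $q$ repeatedly, producing nested connecting segments; thus I cannot claim that $B$ meets only finitely many $c_A$ and must instead exhibit an avoiding curve directly.

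\emph{The construction (the main obstacle).} I plan to push the path deep into the cusp. Choosing $\gamma$ so that the lattices $A^s\mathds{Z}^2$ acquire, for all interior $s$, a nonzero vector shorter than the systole of every lattice in $B$, the middle of the path cannot coincide with any element of $B$. As the endpoints $\mathds{Z}^2$ and $q\mathds{Z}^2$ have systole bounded below, only the short initial and terminal arcs — where the path still sits in a fixed compact part of $M_2$ — can meet $B$, and there only finitely many lattices are in play. The delicate point, which I expect to be the heart of the proof, is to dodge these finitely many lattices with the two end arcs while keeping $A=q\gamma$ hyperbolic and inside the arithmetic coset $q\Gamma$. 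I intend to resolve this using the rationality of $q$: a point of $B$ determines the one-parameter orbit through its lift and hence constrains $A$ to a thin subset of $q\Gamma$, so among the infinitely many admissible $\gamma$ realizing an excursion of the required depth only finitely many can force an end arc through a prescribed point of $B$; enough $\gamma$ then survive the finitely many exclusions, and the resulting $c_A$ avoids $B$.

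\emph{Density.} Finally, $\textrm{SL}(2,\mathds{Q})$ is dense in $\textrm{SL}(2,\mathds{R})$, so $\{(g_1\Gamma,g_2\Gamma):g_1^{-1}g_2\in\textrm{SL}(2,\mathds{Q})\}$ is dense in $M_2\times M_2$; by the above every such pair is non-blockable, which yields the asserted dense set of non-blockable pairs.
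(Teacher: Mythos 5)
Your framing is correct as far as it goes: non-blockability does mean that for every finite $B$ some connecting curve avoids $B$, the reduction to $m_1=o$, $m_2=q\Gamma$ by left translation is valid, and you are right that a crude count fails because a single point can lie on infinitely many $c_A$. But there is a genuine gap exactly where you yourself locate ``the heart of the proof,'' and it is not a technicality that more care would fix --- it is the entire difficulty of the theorem. Two problems. First, the cusp excursion is not automatic: large trace of $A=q\gamma$ does not by itself force the path $s\mapsto A^s\mathds{Z}^2$ into the cusp. Whether $A^{1/2}\mathds{Z}^2$ has a very short vector depends on the position of the eigendirections of $A$ relative to $\mathds{Z}^2$ (one needs a nonzero integer point in a box of area comparable to the square of the target systole), so ``choosing $\gamma$ so that the lattices acquire a short vector for all interior $s$'' is itself a construction you have not supplied. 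Second, and more seriously, the finiteness claim for the end arcs is unsupported. For a fixed $b=h\Gamma\in B$, the set of bad $A$ is $\bigcup_{\gamma'\in\Gamma}\bigl(\{(h\gamma')^{u}:u>1\}\cap q\Gamma\bigr)$, a countable union of intersections of one-parameter orbits with the discrete coset $q\Gamma$; each such intersection can itself be infinite (this is precisely your own observation about nested segments), and nothing in the rationality of $q$ is shown to reduce this to finitely many exclusions among your admissible $\gamma$. Saying that $b$ ``constrains $A$ to a thin subset of $q\Gamma$'' is not an argument: a thin subset can still contain your entire admissible family, and deciding that it does not is a Diophantine question of the same order of difficulty as the theorem itself.

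For comparison, the paper attacks precisely this point arithmetically rather than geometrically. It invokes Gutkin's criterion (Proposition~\ref{prop:fcosets}) that blockability of $m$ from $m_0$ forces the blocked points $(g\gamma_i)^{t_i}$ into a finite union of $\Gamma$-cosets, runs this against an explicit sequence $\gamma_i$ with $\mathrm{tr}(g\gamma_i)\to\infty$, shows via a $4\times4$ determinant computation (Lemma~\ref{polynomial3}) and linear-dependence arguments (Lemmas~\ref{gamma-lin1} and~\ref{gamma-lin2}) that the modified times $\lambda_i$ are eventually constant with $\lambda^2,\ \lambda a(\lambda_i)\in\mathds{Q}$, and then extracts the contradiction $(4kl-4k^2)y^2=(\tilde a_i+kx_i)(\tilde a_i-kx_i)$, a fixed positive integer factored with $x_i\to\infty$. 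Your geometric picture is a plausible heuristic for why the result should hold, but without a quantitative substitute for that Diophantine step the proof is incomplete.
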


As we will see, this easily implies the following:

\begin{thm}\label{thm-Mn}
The homogeneous space $M_n,\, n>2$ has infinitely many pairs of non-blockable points. 
\end{thm}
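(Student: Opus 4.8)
The plan is to reduce the case $n>2$ to the case $n=2$ already settled in Theorem~\ref{thm-main}, by embedding $M_2$ into $M_n$ in a way that carries connection curves to connection curves. Concretely, I would use the block inclusion $\iota:\mathrm{SL}(2,\mathbb{R})\hookrightarrow\mathrm{SL}(n,\mathbb{R})$, $A\mapsto\begin{pmatrix}A&0\\0&I_{n-2}\end{pmatrix}$. Since $\iota(\mathrm{SL}(2,\mathbb{Z}))\subset\mathrm{SL}(n,\mathbb{Z})$, this descends to a map $\bar\iota:M_2\to M_n$, $g\,\mathrm{SL}(2,\mathbb{Z})\mapsto\iota(g)\,\mathrm{SL}(n,\mathbb{Z})$. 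First I would check that $\bar\iota$ is well-defined and injective: well-definedness is immediate from $\iota(\mathrm{SL}(2,\mathbb{Z}))\subset\mathrm{SL}(n,\mathbb{Z})$, and injectivity follows because $\iota(h^{-1}g)\in\mathrm{SL}(n,\mathbb{Z})$ forces $h^{-1}g\in\mathrm{SL}(2,\mathbb{Z})$, the remaining block being already the integer identity.

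Next I would verify that $\bar\iota$ intertwines connection curves. The differential $\iota_*$ sends $X\in\mathfrak{sl}(2,\mathbb{R})$ to $\operatorname{diag}(X,0)\in\mathfrak{sl}(n,\mathbb{R})$ and satisfies $\iota(\exp(tX))=\exp(t\,\iota_*X)$, so the image under $\bar\iota$ of the orbit $t\mapsto\exp(tX)\,g\,\mathrm{SL}(2,\mathbb{Z})$ is precisely the orbit $t\mapsto\exp(t\,\iota_*X)\,\iota(g)\,\mathrm{SL}(n,\mathbb{Z})$, a connection curve in $M_n$. In particular, if a connection curve $\gamma$ joins $m_1,m_2\in M_2$, then $\bar\iota(\gamma)$ is a connection curve joining $\bar\iota(m_1),\bar\iota(m_2)$ and lies entirely inside $\bar\iota(M_2)$.

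With these facts, the main argument is a pullback of blocking sets. Fix any non-blockable pair $m_1=g_1\mathrm{SL}(2,\mathbb{Z})$, $m_2=g_2\mathrm{SL}(2,\mathbb{Z})$ in $M_2$ with $g_1^{-1}g_2\in\mathrm{SL}(2,\mathbb{Q})$, as supplied by Theorem~\ref{thm-main}, and consider the pair $\bar\iota(m_1),\bar\iota(m_2)$ in $M_n$. Suppose, for contradiction, that a finite set $B\subset M_n\setminus\{\bar\iota(m_1),\bar\iota(m_2)\}$ blocks it. Every connection curve $\gamma$ in $M_2$ joining $m_1,m_2$ yields a connection curve $\bar\iota(\gamma)$ joining $\bar\iota(m_1),\bar\iota(m_2)$, which must meet $B$; since $\bar\iota(\gamma)\subset\bar\iota(M_2)$, the intersection point lies in the finite set $B_0:=B\cap\bar\iota(M_2)$. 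Hence $\bar\iota^{-1}(B_0)$, a finite subset of $M_2\setminus\{m_1,m_2\}$ by injectivity of $\bar\iota$ and the fact that $B$ avoids $\bar\iota(m_1),\bar\iota(m_2)$, meets every connection curve joining $m_1,m_2$, i.e.\ blocks the pair, contradicting Theorem~\ref{thm-main}.

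Finally, to produce infinitely many pairs I would let the second point vary: taking $m_1=e\,\mathrm{SL}(2,\mathbb{Z})$ fixed and $m_2=q\,\mathrm{SL}(2,\mathbb{Z})$ over the infinitely many distinct cosets with $q\in\mathrm{SL}(2,\mathbb{Q})$, injectivity of $\bar\iota$ produces infinitely many distinct non-blockable pairs in $M_n$. I do not anticipate a serious obstacle here; the only points needing care are the injectivity of $\bar\iota$ (so that the pullback of a finite blocking set is genuinely a finite blocking set in $M_2$) and the observation that $\bar\iota(\gamma)$ stays inside $\bar\iota(M_2)$, which is what lets a blocking set in $M_n$ restrict to one in the embedded copy. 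Notably, the argument does \emph{not} require the embedded copy to capture all connection curves of $M_n$ between the two points: a larger family of connecting curves only makes blocking harder, which is exactly the direction we need.
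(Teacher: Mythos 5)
Your proposal is correct and follows essentially the same route as the paper: the paper embeds $\mathrm{SL}(2,\mathbb{R})$ as a $2\times 2$ block $G_i$ in rows/columns $i,i+1$, notes $G_i\cap\mathrm{SL}(n,\mathbb{Z})\cong\mathrm{SL}(2,\mathbb{Z})$, and invokes its earlier lemma on closed subspaces $Y=H/(\Gamma\cap H)\subset X=G/\Gamma$ to transfer the non-blockable pairs of Theorem~\ref{thm-main} into $M_n$. Your explicit pullback-of-blocking-sets argument (curves from the embedded copy stay in the copy, so $B\cap\bar\iota(M_2)$ pulls back to a finite blocking set, and extra connecting curves in $M_n$ only make blocking harder) is precisely the content of that lemma, spelled out for the single block embedding in rows/columns $1,2$.
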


\begin{rmk}
As discussed in Section 2, quotient spaces of a Lie group mod two commensurable lattices carry the same blocking property.
Margulis Arithmeticity Theorem \cite[p.92]{Arithmetic Groups}, \cite[p.298]{Discrete subgroups}, implies every lattice of $\textrm{SL}(n,\mathds{R}),\, n \geq 3$ is arithmetic. As a result, a large class of lattices in $\textrm{SL}(n,\mathds{R}),\, n \geq 3$ are commensurable to $\textrm{SL}(n,\mathds{Z})$. In particular, if $\Gamma$ is a lattice and the subgroup $\Gamma \cap \textrm{SL}(n,\mathds{Z})$ is of finite index in $\Gamma$, then $\Gamma$ and $\textrm{SL}(n,\mathds{Z})$ are commensurable. Hence, all the homogeneous spaces $\textrm{SL}(n,\mathds{R})/\Gamma$, for such lattices $\Gamma$ are non-blockable. Moreover, for every lattice $\Gamma \subset \textrm{SL}(n,\mathds{Q})$, $\textrm{SL}(n,\mathds{R})/\Gamma$ is non-blockable \cite[p.319]{Discrete subgroups}.
\end{rmk}

We also show that for a large class of cocompact lattices $\Gamma \subset \textrm{SL}(2,\mathds{R})$, $\textrm{SL}(2,\mathds{R})/\Gamma$ is not blockable. We specifically prove (see Section 4 for details):

\begin{thm}\label{thm-mainco}
Let $a,b$ be positive integers such that $\Gamma=\textrm{SL}(1,\mathds{H}_{\mathds{Z}}^{a,b})$ is a cocompact lattice of $G=\textrm{SL}(1,\mathds{H}_{\mathds{R}}^{a,b})$. If $g=x+yi \in \textrm{SL}(1,\mathds{H}_{\mathds{Q}}^{a,b})$, then $g\Gamma \subset G/\Gamma$ is not finitely blockable from $m_0=\Gamma$. Therefore the homogeneous space $G/\Gamma$ is not finitely blockable.
\end{thm}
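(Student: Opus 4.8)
The plan is to transfer the problem to $\SL(2,\mathds{R})$ and then run a counting argument in place of the cusp-escape available in the non-cocompact Theorem~\ref{thm-main}. First I would use that $a,b>0$ makes the quaternion algebra $\mathds{H}^{a,b}_{\mathds{R}}$ split, fixing an isomorphism $\Phi\colon\mathds{H}^{a,b}_{\mathds{R}}\xrightarrow{\ \sim\ }M_2(\mathds{R})$ under which the reduced norm becomes the determinant, so that $\Phi$ restricts to an isomorphism $G=\SL(1,\mathds{H}^{a,b}_{\mathds{R}})\cong\SL(2,\mathds{R})$ carrying $\Gamma$ to a cocompact lattice $\Gamma'$. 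Under a suitable such $\Phi$ the element $g=x+yi$ becomes the diagonal matrix $\operatorname{diag}(x+y\sqrt a,\,x-y\sqrt a)$, which (as $x^2-ay^2=1$ forces $|x|\ge 1$) is hyperbolic whenever $y\neq 0$; the case $y=0$ gives $g=\pm1$ and is trivial. This reduces Theorem~\ref{thm-mainco} to the non-blockability of the pair $(m_0,m)=(\Gamma',g\Gamma')$ in $\SL(2,\mathds{R})/\Gamma'$ for a fixed rational hyperbolic $g$. As in the proof of Theorem~\ref{thm-main}, I would record that the set of connecting curves $\Con(m_0,m)$ is indexed by those $\gamma\in\Gamma'$ with $g\gamma\in\exp(\mathfrak{sl}(2,\mathds{R}))$: each such $\gamma$ yields the connecting curve $C_\gamma=\{(g\gamma)^t\Gamma'\colon t\in[0,1]\}$ with midpoint $(g\gamma)^{1/2}\Gamma'$.

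Next I would produce infinitely many pairwise distinct connecting curves. Two ingredients are relevant. On one hand, rationality of $g$ gives, by commensurability of orders in $\mathds{H}^{a,b}_{\mathds{Q}}$, that $\Lambda:=\Gamma'\cap g\Gamma'g^{-1}$ has finite index in $\Gamma'$; every $\lambda\in\Lambda$ acts on $M$ by left translation fixing both $m_0$ and $m$ and permutes $\Con(m_0,m)$, so $\Lambda$ is the natural symmetry group of the configuration. On the other hand, since $\Gamma'$ is an infinite, Zariski-dense, cocompact lattice, the trace $\operatorname{tr}(g\gamma)$ takes values in $(2,\infty)$ for infinitely many $\gamma$; each such $g\gamma$ is hyperbolic with positive eigenvalues, hence lies on a unique one-parameter subgroup, and distinct translation axes give distinct curves $C_\gamma$. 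This already reproves, in the cocompact setting, that the midpoint set is infinite, i.e.\ the analogue of Gutkin's failure of midpoint-blocking; the substance of the theorem is to upgrade this to the failure of \emph{all} finite blocking.

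The heart of the argument---and the step I expect to be hardest---is a quantitative rigidity/counting estimate showing that no finite set meets every $C_\gamma$. Writing $\mathcal N(T)$ for the number of connecting curves with $\|g\gamma\|\le T$, a lattice-point count gives $\mathcal N(T)\asymp T^{2}$. For a fixed $b=\beta\Gamma'\in M$, a curve $C_\gamma$ passes through $b$ exactly when $e$, some lift $\beta\delta$ of $b$, and $g\gamma$ are collinear on a common one-parameter subgroup, i.e.\ $g\gamma=(\beta\delta)^{r}$ for some $\delta\in\Gamma'$ and $r>1$; since three prescribed points of $\SL(2,\mathds{R})$ lie on a common one-parameter subgroup only under a codimension condition, I would bound the number of such curves with $\|g\gamma\|\le T$ by $o(T^{2})$---indeed most lifts $\beta\delta$ yield a geodesic through $m_0$ and $b$ that never returns to the single point $m$. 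Granting this, any finite $B=\{b_1,\dots,b_N\}$ blocks at most $\sum_i o(T^2)=o(T^2)$ of the $\asymp T^2$ curves, so for $T$ large some $C_\gamma$ avoids $B$; hence $(m_0,m)$ is not finitely blockable, and letting $g$ range over the stated elements of $\SL(1,\mathds{H}^{a,b}_{\mathds{Q}})$ gives the final assertion. The crux is precisely this per-point estimate: in Theorem~\ref{thm-main} one instead pushes the relevant blocking points into the cusp, an option unavailable here because $G/\Gamma$ is compact, so the cusp-escape must be replaced by the above counting of collinear lattice configurations, where the arithmetic of the order $\mathds{H}^{a,b}_{\mathds{Z}}$ and the hyperbolicity of $g$ enter.
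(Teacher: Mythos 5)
Your reduction to $\SL(2,\mathds{R})/\Gamma'$ via the splitting isomorphism and your description of the blocking criterion (a curve $C_\gamma$ passes through $b=\beta\Gamma'$ iff $g\gamma=(\beta\delta)^{r}$ for some $\delta\in\Gamma'$, $r>1$) are both fine, but the argument has a genuine gap exactly where you flag it: the per-point estimate that at most $o(T^2)$ of the $\asymp T^2$ connecting curves of norm $\le T$ pass through a fixed $b$. This is not a routine ``codimension'' remark. The natural way to bound it is to sum over the lifts $\beta\delta$ with $\|\beta\delta\|\le T$, of which there are $\asymp T^2$, and for each such lift count the elements of $g\Gamma'$ lying on the one-parameter subgroup through $\beta\delta$; discreteness only gives $O(\log T)$ per lift, so the trivial bound is $O(T^2\log T)$, which is \emph{worse} than the total count. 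To get $o(T^2)$ you must show that for all but a vanishing proportion of $\delta$ the orbit $\{(\beta\delta)^r:r>1\}$ meets $g\Gamma'$ not at all --- a non-collinearity statement about three lattice cosets that you neither prove nor reduce to anything standard. Note also that your counting scheme makes no use of the hypothesis $g=x+yi\in\SL(1,\mathds{H}^{a,b}_{\mathds{Q}})$ (rationality enters your sketch only through the finite-index remark about $\Lambda$, which is never used); if the scheme worked it would prove total non-blockability of every cocompact quotient of $\SL(2,\mathds{R})$ for every $g$, a far stronger statement than the theorem. That the theorem is restricted to these special $g$ is a strong hint that the soft count is not available.

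The paper's route is entirely arithmetic and bypasses counting. It works directly in the quaternion model: Lemma \ref{gamma-i-co} uses Pell's equation $p^2-aq^2=n$ to manufacture a sequence $\gamma_i=p_i+q_ii+r_1j+s_1k$ for which the $j$- and $k$-components of $g\gamma_i$ are \emph{fixed} while $x_i=\mathrm{Re}(g\gamma_i)\to\infty$ through rationals with a common denominator. Proposition \ref{prop:lambda2} then shows that a finite blocking set forces the modified times $\lambda_i$ to satisfy $\lambda_i^2,\ \lambda_i a(\lambda_i)\in\mathds{Q}$ and, after passing to a subsequence, $\lambda_i\equiv\lambda$ (this uses the invertibility of the matrix $B(i,j)$, which is where $z_i^2-aw_i^2\neq 0$ and hence the non-squareness of $a$ enters, plus the linear-dependence Lemmas \ref{gamma-lin1} and \ref{gamma-lin2}). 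Finally, exactly as in the proof of Theorem \ref{thm-main} with $C_i$ replaced by $2x_i$, rationality of $\lambda^2=k/l$ and of $\lambda a(\lambda_i)$ turns \eqref{alambda-co} into an identity expressing the fixed positive integer $(4kl-4k^2)y^2$ as $(\tilde a_i+kx_i)(\tilde a_i-kx_i)$, which is impossible once $x_i\to\infty$. You would need either to supply a genuine proof of your $o(T^2)$ estimate (likely requiring effective equidistribution input well beyond the paper's toolkit) or to switch to this arithmetic argument.
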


\par
The organization of the paper is as follows. In Section 2, we review connection blocking concept and general properties for homogeneous space; then we formulate one parameter subgroups in the Lie group $\textrm{SL}(2,\mathds{R})$. In Section 3, we first prove a technical proposition, then we state and prove Theorem 1. Finally, we prove Theorem 2 concluding homogeneous spaces $\textrm{SL}(n,\mathds{R})/\textrm{SL}(n,\mathds{Z})$ are not blockable. In section 4, we also present a quaternionic structure of $\textrm{SL}(2,\mathds{R})$ and a way for making co-compact lattices in this context. We state and prove a counterpart of the technical proposition of Section 3, then we prove Theorem 3.

%We are pleased to thank Ralf Spatzier for helpful discussions. We would like to 
%thank Xiangdong Xie, who first told us of Bonk and Schramm's work. We would also like to thank Marc Bourdon for 
%informing us of his joint work with Kleiner \cite{BK}. We would like to thank Jeremy Tyson for helpful remarks concerning the relation between the different notions of quasicircles (see Remarks \ref{rmk:warning} and \ref{rmk:quasicircles}).

%Part of this work was completed during a collaborative
%visit of the third author to Ohio State University (OSU), which was partially funded by the Mathematics Research Institute at OSU.

%%%%%%%%%%%%%%%%%%%%%%%%%%
%%%%%%%%%%%%%%%%%%%%%%%%%%
%%%%%%%%%%%%%%%%%%%%%%%%%%

\section{Preliminaries}

In this section we review general preliminaries of blocking properties in homogeneous spaces, and the tools needed to state and prove our main result. We follow the notation and discussion in \cite{Connection blocking}.

\subsection{Connection blocking in homogeneous spaces}
Let $G$ be a connected Lie group, $M=G/\Gamma$, $\Gamma \subset G$ a lattice. For $g \in G, m \in M$, $g \cdot m$ denotes the action of $G$ on $M$. Let $\mathfrak{G}$ be the Lie Algebra of $G$ and let $\exp : \mathfrak{G} \rightarrow G$ be the exponential map. For $m_1,m_2 \in M$ let $C_{m_1,m_2}$ be the set of parametrized curves $c(t)=\exp(tx)\cdot m, 0 \leq t \leq 1$, such that $c(0)=m_1, c(1)=m_2$. We say that $C_{m_1,m_2}$ is the collection of \textit{connecting curves} for the pair $m_1,m_2$. Let $I \subset \mathds{R}$ be any interval. If $c(t), t\in I$, is a curve, we denote by $c(I)\subset M$ the set $\{c(t): t\in I\}$. A \textit{finite set} $B \subset M \ \{m_1,m_2 \}$ is a \textit{blocking set} for the pair $m_1,m_2$ if for any curve $c$ in $C_{m_1,m_2}$ we have $c([0,1])\cup B \neq \emptyset$.
If a blocking set exists, the pair $m_1,m_2$ is \textit{connection blockable}, or simply \textit{blockable}. 
The analogy with Riemannian security \cite{Blocking of billiard,Blocking light,Blocking: new examples,Insecurity for compact: commentary} suggests the following:

\begin{dfn} Let $M=G/\Gamma$ be a homogeneous space.  
\end{dfn}
\begin{enumerate}[label=\roman*)]
\item  $M$ is \textit{connection blockable} if every pair of its points is blockable. If there exists at least one non-blockable pair of points in $M$, then $M$ is non-blockable. 
\item $M$ is \textit{uniformly blockable} if there exists $N \in \mathds{N}$ such that every pair of its points can be blocked with a set $B$ of cardinality at most $N$. The smallest such $N$ is the blocking number for $M$. 
\item A pair $m_1,m_2 \in M$ is \textit{midpoint blockable} if the set $\{ c(1/2) : c \in C_{m_1,m_2} \}$ is finite. A homogeneous space is midpoint blockable if all pairs of its points are midpoints blockable. 
\item A homogeneous space is \textit{totally non-blockable} if no pair of its points is blockable.
\end{enumerate}

Blocking property of homogeneous spaces carries some straightforward and expected properties which can be summarized in the following proposition.

\begin{prop}\label{prop:genblock}
Let $M=G/\Gamma$ where $\Gamma \subset G$ is a lattice, and let $m_0=\Gamma$ be the identity element of $M$. Then the following holds:
\begin{enumerate}[label=\roman*)]
\item The homogeneous space $M$ is blockable (resp. uniformly blockable, midpoint blockable) if and only if all pairs $m_0,m$ are blockable (resp. uniformly blockable,midpoint blockable). The space $M$ is totally non-blockable if and only if no pair $m_0,m$ is blockable;
\item Let $\tilde{\Gamma} \subset \Gamma$ be lattices in $G$, let $M=G/\Gamma, \tilde{M}=G/\tilde{\Gamma}$, and let $p: \tilde{M} \rightarrow M$ be the covering. Let $m_1,m_2 \in M$ and let $\tilde{m_1},\tilde{m_2} \in \tilde{M}$ be such that $m_1=p(\tilde{m_1}),m_2=p(\tilde{m_2})$. If $B \subset M$ is a blocking set for $m_1,m_2$ (resp. $\tilde{B} \subset \tilde{M}$ is a blocking set for $\tilde{m_1},\tilde{m_2}$) then $p^{-1}(B)$ (resp. $p(\tilde{B}$) is a blocking set for $\tilde{m_1},\tilde{m_2}$ (resp. $m_1,m_2$).
\item Let $G', G''$ be connected Lie groups with lattices $\Gamma' \subset G',\Gamma'' \subset G''$, and let $M'=G'/\Gamma',M''=G''/\Gamma''$. Set $G=G'\times G'', M=M' \times M''$. Then a pair $(m_1',m_1''),(m_2',m_2'') \in M$ is connection blockable if and only if both pairs $m_1',m_2' \in M'$ and $m_1'',m_2'' \in M''$ are connection blockable.
\end{enumerate}
\end{prop}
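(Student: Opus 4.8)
The engine I would build everything on is the observation that the left $G$-action on $M$ carries connecting curves to connecting curves. For $g\in G$ and a connecting curve $c(t)=\exp(tx)\cdot m$ one computes
\[
g\cdot c(t)=g\exp(tx)\cdot m=\exp\bigl(t\operatorname{Ad}(g)x\bigr)\cdot(g\cdot m),
\]
so $g\cdot c$ is again a connecting curve, generated by $\operatorname{Ad}(g)x$, joining $g\cdot m_1$ to $g\cdot m_2$. Thus $c\mapsto g\cdot c$ is a bijection $C_{m_1,m_2}\to C_{g\cdot m_1,\,g\cdot m_2}$ commuting with evaluation at each fixed time, so it preserves midpoints and sends a finite set $B$ to the finite set $g\cdot B$ of the same cardinality. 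The first step, then, is to record that $B$ blocks $(m_1,m_2)$ if and only if $g\cdot B$ blocks $(g\cdot m_1,g\cdot m_2)$; everything in (i) follows from this. Given any pair with $m_1=g_1\Gamma$, I would apply this with $g=g_1^{-1}$ to translate $m_1$ to the base point $m_0$, reducing blocking of an arbitrary pair to blocking of a pair $(m_0,m)$. Because the correspondence preserves cardinalities and midpoints, the identical reduction will serve verbatim for uniform blocking, midpoint blocking, and total non-blockability. I expect (i) to be routine.

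For (ii) I would first note that $\tilde\Gamma\subset\Gamma$ being both lattices forces $[\Gamma:\tilde\Gamma]<\infty$ (covolumes are finite and multiplicative under the covering), so $p$ is a finite covering and preimages of finite sets stay finite. Since $p$ is $G$-equivariant it carries connecting curves to connecting curves, and conversely any connecting curve downstairs lifts to one upstairs through a prescribed starting point. The ascent direction is then immediate: if $B$ blocks $(m_1,m_2)$, a connecting curve $\tilde c$ from $\tilde m_1$ to $\tilde m_2$ projects to a connecting curve meeting $B$ at an interior time $t_0$, whence $\tilde c(t_0)\in p^{-1}(B)$, a finite set. The delicate point, which I expect to be the main technical obstacle in (ii), is the descent direction: a connecting curve $c$ from $m_1$ to $m_2$, lifted from $\tilde m_1$, need not terminate at $\tilde m_2$ but at one of the finitely many points of $p^{-1}(m_2)$. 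I plan to handle this by blocking $\tilde m_1$ from \emph{every} point of $p^{-1}(m_2)$, taking the finite union of the resulting blocking sets, and projecting; this finitely-many-lifts bookkeeping is precisely what reconciles the single-target phrasing with the covering geometry.

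For (iii) the starting observation is that one-parameter subgroups of $G=G'\times G''$ are exactly the pairs $t\mapsto(\exp(tx'),\exp(tx''))$, so a connecting curve in $M'\times M''$ is precisely a pair $(c',c'')$ of connecting curves sharing the common parameter $t$, with $c'\in C_{m_1',m_2'}$ and $c''\in C_{m_1'',m_2''}$ arbitrary and independent. The forward implication I would prove directly: if the product pair is blocked by a finite $B$, fix one connecting curve $c_0''$ in the second factor; for every $c'$ the pair $(c',c_0'')$ meets $B$ at an interior time, and projecting to $M'$ shows that $\pi'(B)$, with its at most two endpoint values deleted, blocks $(m_1',m_2')$, and symmetrically for the other factor. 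The reverse implication will be the principal obstacle: given finite blocking sets $B',B''$ for the two factors, the naive candidate $B'\times B''$ fails because $c'$ may meet $B'$ and $c''$ meet $B''$ at \emph{different} times, while a product blocking point must be struck at a single common time. The plan is to synchronize the blocking times, most cleanly by routing through the midpoint construction, where all curves are caught at the common time $t=1/2$, so that $\{c'(1/2)\}\times\{c''(1/2)\}$ is a finite product blocking set; reducing general blockability to this common-time phenomenon is the step I expect to require the most care.
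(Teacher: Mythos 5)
Your proofs of parts (i) and (ii) are sound; note that the paper itself supplies no argument here beyond declaring (i) immediate and referring (ii) and (iii) to their Riemannian counterparts (Gutkin--Schroeder, Prop.~1, and Burns--Gutkin, Lemma 5.1 and Prop.~5.2), so you are reconstructing proofs the paper omits. Your translation identity $g\exp(tx)g^{-1}=\exp(t\operatorname{Ad}(g)x)$ is exactly the content of (i). In (ii) you have correctly located the real difficulty: a connecting curve from $m_1$ to $m_2$ lifts from $\tilde m_1$ to a curve ending at an \emph{arbitrary} point of $p^{-1}(m_2)$, so the descent direction must block $\tilde m_1$ from every lift of $m_2$ and project the union of those blocking sets. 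Taken literally, the displayed descent statement is in fact false: for $G=\mathbb{R}$, $\Gamma=\mathbb{Z}$, $\tilde\Gamma=2\mathbb{Z}$, $m_1=0$, $m_2=1/2$, the set $\{1/4,5/4\}$ blocks the lifted pair in $\mathbb{R}/2\mathbb{Z}$, but its image $\{1/4\}$ misses the connecting curve $t\mapsto -t/2$ downstairs. Your all-lifts formulation is the one actually proved in the cited reference and the one needed for the commensurability corollary, so this part of your proposal is a correction rather than a gap.

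The genuine gaps are in part (iii), in both directions. Forward: deleting the endpoint values from $\pi'(B)$ does not close the argument, because a curve $c'$ may be such that every meeting of $(c',c_0'')$ with $B$ occurs at a point $(b',b'')$ with $b'\in\{m_1',m_2'\}$ and $b''\notin\{m_1'',m_2''\}$ --- such points are legitimate members of $B$, yet after your deletion nothing blocks $c'$; you would need to handle these returns of $c'$ to its endpoints separately. Reverse: your synchronization plan proves a different, weaker theorem. Catching every product curve at the common time $t=1/2$ requires the sets $\{c'(1/2):c'\in C_{m_1',m_2'}\}$ and $\{c''(1/2):c''\in C_{m_1'',m_2''}\}$ to be finite, i.e.\ \emph{midpoint} blockability of the factors, which is strictly stronger than the blockability you are given; with only finite blocking sets $B'$, $B''$ and no control over the times at which individual curves meet them, your construction produces no finite subset of $M'\times M''$. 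This is a real obstruction, not bookkeeping (it is precisely why the product question in the Riemannian setting required separate treatment by Herreros and by Burns--Gutkin), and the paper sidesteps it by citation; to complete (iii) you must either import the argument of the cited Lemma 5.1 and Proposition 5.2 or restrict the claim to midpoint-blockable factors.
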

\begin{proof}
Claim \textit{i)} is immediate from the definitions. The proofs of claim \textit{ii)} and claim \textit{iii)} are analogous to the proof of their counterparts for riemannian security. See prposition 1 in \cite{Connecting geodesics and security} for claim \textit{ii)}, and Lemma 5.1 and proposition 5.2 in \cite{Growth of the number of geodesics} for claim \textit{iii)}.
\end{proof}
We say homogeneous spaces $M_1,M_2$ have \textit{identical blocking property} if both are blockable (or not), midpoint blockable (or not), totally non-blockable (or not), etc. 

Recall that two subgroups $\Gamma_1,\Gamma_2 \subset G$ are \textit{commensurable}, $\Gamma_1 \sim \Gamma_2$, if there exists $g \in G$ such that the group $\Gamma_1 \cap g\Gamma_2g^{-1}$ has finite index in both $\Gamma_1$ and $g\Gamma_2g^{-1}$. Commensurability yields an equivalence relation in the set of lattices in $G$. We will use the following immediate Corollary of Proposition ~\ref{prop:genblock}.
\begin{cor}\label{cor:commens}

If lattices $\Gamma_1,\Gamma_2 \subset G$ are commensurable, then the homogeneous spaces $M_i=G/\Gamma_i : i=1,2$ have identical blocking properties.

\end{cor}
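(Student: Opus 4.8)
The plan is to reduce the statement to the situation covered directly by Proposition~\ref{prop:genblock}~\textit{ii)}, namely a finite covering arising from a finite-index inclusion of lattices. The definition of commensurability involves a conjugation, so the first step is to dispose of that conjugation by a change of variables on the homogeneous space. Let $g\in G$ be such that $\Lambda:=\Gamma_1\cap g\Gamma_2 g^{-1}$ has finite index in both $\Gamma_1$ and $\Gamma_2':=g\Gamma_2 g^{-1}$. I first observe that right translation by $g$ descends to a well-defined map $\psi:G/\Gamma_2'\to G/\Gamma_2$, $x\Gamma_2'\mapsto xg\Gamma_2$, which is a diffeomorphism with inverse $y\Gamma_2\mapsto yg^{-1}\Gamma_2'$.

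The key point I would verify is that $\psi$ carries connection curves to connection curves. Since the $G$-action is by left translation, a connection curve through $x\Gamma_2'$ with direction $X\in\mathfrak{G}$ is $t\mapsto \exp(tX)x\Gamma_2'$, and $\psi(\exp(tX)x\Gamma_2')=\exp(tX)xg\Gamma_2$, which is exactly the connection curve through $\psi(x\Gamma_2')$ with the same direction $X$. Thus $\psi$ induces a bijection $C_{m_1,m_2}\to C_{\psi(m_1),\psi(m_2)}$ for every pair, preserving the parameter $t$, so a finite set $B$ blocks $m_1,m_2$ if and only if $\psi(B)$ blocks $\psi(m_1),\psi(m_2)$. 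The same correspondence handles midpoints (the image of $c(1/2)$ is the midpoint of $\psi\circ c$), cardinalities (so uniform blocking), and the nonblockable-for-every-pair dichotomy defining total non-blockability. Hence $G/\Gamma_2'$ and $G/\Gamma_2$ have identical blocking properties, and it suffices to compare $G/\Gamma_1$ with $G/\Gamma_2'$.

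Now $\Lambda=\Gamma_1\cap\Gamma_2'$ is a common sublattice of finite index in each of $\Gamma_1$ and $\Gamma_2'$, so the natural projections $p_i:G/\Lambda\to G/\Gamma_i'$ (writing $\Gamma_1'=\Gamma_1$) are finite coverings. Applying Proposition~\ref{prop:genblock}~\textit{ii)} to $p_i$ in both directions gives the equivalence I need: a pair in $G/\Lambda$ is blockable precisely when its image pair in $G/\Gamma_i'$ is blockable, because $p_i^{-1}(B)$ is finite (the cover is finite) whenever $B$ is, and $p_i(\tilde B)$ is finite whenever $\tilde B$ is. Running over all pairs — every pair downstairs lifts and every pair upstairs projects — yields that $G/\Gamma_i'$ is blockable if and only if $G/\Lambda$ is blockable, together with the analogous equivalences for the uniform, midpoint, and totally non-blockable notions (the cardinality bounds changing only by the finite covering degree). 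Combining the two cases $i=1,2$ through the common space $G/\Lambda$, and then undoing the conjugation via $\psi$, shows that $G/\Gamma_1$ and $G/\Gamma_2$ have identical blocking properties.

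I expect the only genuine content — and the easiest place to slip — to be the conjugation step: one must check both that $\psi$ is well defined on cosets and that it is connection-preserving, since the entire reduction to Proposition~\ref{prop:genblock}~\textit{ii)} rests on identifying the two quotients by a map that respects one-parameter-subgroup orbits. Once that is in hand, the covering argument is a formal application of part~\textit{ii)} together with transitivity of the equivalence through $G/\Lambda$.
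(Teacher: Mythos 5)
Your argument is correct and is exactly the route the paper intends: the paper states this corollary as an immediate consequence of Proposition~\ref{prop:genblock}~\textit{ii)} and gives no written proof, and your write-up simply supplies the two standard ingredients — that right translation by $g$ identifies $G/g\Gamma_2 g^{-1}$ with $G/\Gamma_2$ while commuting with the left action of one-parameter subgroups, and that the common finite-index sublattice $\Lambda$ gives finite coverings to which part~\textit{ii)} applies in both directions. No discrepancy with the paper's approach.
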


Let $\textrm{exp}:\mathfrak{G} \rightarrow G$ be the exponential map. For $\Gamma \subset G$ denote by $p_{\Gamma}:G \rightarrow G/\Gamma$ the projection, and set $\textrm{exp}_{\Gamma}=p_{\Gamma} \circ \textrm{exp}: \mathfrak{G} \rightarrow G/\Gamma$. We will say that a pair $(G,\Gamma)$ is of \textit{exponential type} if the map $\textrm{exp}_{\Gamma}$ is surjective. Let $M=G/\Gamma$. For $m \in M$ set $\textrm{Log}(m)=\textrm{exp}_{\Gamma}^{-1}(m)$. Note, $\textrm{Log}(m)$ may have more than one element. We will use the following basic fact to prove a point is not blockable from identity. See \cite{Connection blocking} Proposition 2 for the proof.

\begin{prop}\label{prop:fcosets}
Let $G$ be a Lie group, $\Gamma \subset G$ a lattice such that $(G,\Gamma)$ is of exponential type, and let $M=G/\Gamma$. 
Then $m\in M$ is blockable away from $m_0$ if and only if there is a map $x \mapsto t_x$ of $Log(m)$ to $(0,1)$ such that the set $\{\exp(t_x,x): x \in Log(m) \}$ is contained in a finite union of $\Gamma$-cosets. 
\end{prop}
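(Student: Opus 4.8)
The plan is to convert the blocking condition into a purely coset-theoretic statement, using the explicit description of the connecting curves, after which the equivalence is almost a restatement; the only genuine work will be at the two excluded points $m_0$ and $m$. First I would record the dictionary. Since $m_0=\Gamma$, a connecting curve for the pair $m_0,m$ has the form $c(t)=\exp(tx)\cdot m_0=\exp(tx)\Gamma$, and the endpoint condition $c(1)=\exp(x)\Gamma=m$ says exactly that $x\in\mathrm{Log}(m)=\exp_\Gamma^{-1}(m)$. Thus the connecting curves are precisely the curves $c_x(t)=\exp(tx)\Gamma$ indexed by $x\in\mathrm{Log}(m)$, a set which the exponential-type hypothesis guarantees is nonempty. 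The crucial observation is that a point $g\Gamma\in M$ lies on $c_x$ at time $t$ if and only if $\exp(tx)\Gamma=g\Gamma$, i.e. $\exp(tx)\in g\Gamma$. Consequently a finite set $B=\{g_1\Gamma,\dots,g_k\Gamma\}$ meets $c_x$ at an interior time if and only if there is some $t\in(0,1)$ with $\exp(tx)\in\bigcup_j g_j\Gamma$.

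For the forward implication I would argue directly. Given a finite blocking set $B=\{g_1\Gamma,\dots,g_k\Gamma\}\subset M\setminus\{m_0,m\}$, each curve $c_x$ meets $B$; since $B$ avoids $m_0=c_x(0)$ and $m=c_x(1)$, any intersection occurs at an interior time, so we may choose $t_x\in(0,1)$ with $c_x(t_x)\in B$. The resulting map $x\mapsto t_x$ satisfies $\exp(t_x x)\in\bigcup_j g_j\Gamma$ by the dictionary, hence $\{\exp(t_x x):x\in\mathrm{Log}(m)\}$ lies in a finite union of $\Gamma$-cosets. This direction is essentially tautological, and it is the one used (in contrapositive form) in the non-blockability arguments of the paper.

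The reverse implication is where care is needed. Starting from a map $x\mapsto t_x\in(0,1)$ with $\{\exp(t_x x)\}\subseteq g_1\Gamma\cup\dots\cup g_k\Gamma$, the natural candidate is $B_0=\{g_1\Gamma,\dots,g_k\Gamma\}$, which by construction meets every $c_x$ at the interior time $t_x$. The obstacle is that $B_0$ need not avoid the forbidden points $m_0$ and $m$, and deleting them may leave unblocked exactly those curves whose only recorded interior intersection was at $m_0$ or $m$. I expect this endpoint issue to be the main difficulty. I would attack it by analyzing these "bad" curves precisely: if $c_x(t_x)=m_0$, so $\exp(t_x x)=\gamma\in\Gamma$, then since $\exp(x)\in m=g\Gamma$ one has $\exp((1-t_x)x)=\exp(x)\gamma^{-1}\in g\Gamma$, so $c_x$ also passes through $m$ at the interior time $1-t_x$; in particular such curves are doubly constrained (they meet both forbidden cosets internally, and never at $t=1/2$ when $m\neq m_0$), and an analogous computation constrains curves meeting $m$ internally.

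Granting that analysis, the plan is to show this exceptional family can be cleared in one of two ways: either re-choose, for each bad $x$, an interior time whose image avoids both $\Gamma$ and $g\Gamma$, or adjoin to $B_0$ finitely many further coset-points that catch all the bad curves, producing a genuine blocking set $B\subset M\setminus\{m_0,m\}$ while keeping the total finite. Either route yields the blocking set and completes the equivalence. I would emphasize that the delicate reverse direction is not needed for Theorems~\ref{thm-main}--\ref{thm-mainco}, which invoke only the easy forward direction, so even a conservative treatment of the endpoint subtlety suffices for the applications.
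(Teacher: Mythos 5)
Your dictionary and your forward direction are correct, and it is worth noting that this is the only direction the rest of the paper actually uses: Theorems \ref{thm-main} and \ref{thm-mainco} argue in the contrapositive, namely that blockability forces the coset condition, which is then contradicted. The paper itself gives no internal proof of the proposition --- it cites Proposition 2 of Gutkin's paper --- so your write-up of the easy implication is a faithful reconstruction of everything the applications need.

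The reverse implication, however, is left as a plan rather than a proof, and the gap there is genuine. Your preliminary analysis is fine: if $\exp(t_x x)=\gamma\in\Gamma$ then indeed $\exp((1-t_x)x)=\exp(x)\gamma^{-1}\in g\Gamma$ (and symmetrically), so a curve meeting $m_0$ at an interior time also meets $m$ at an interior time, and when $m\neq m_0$ no connecting curve can meet $\{m_0,m\}$ at $t=1/2$. But neither of your two proposed repairs follows from these observations. Re-choosing interior times for the bad curves only helps if the new points $\exp(t_x' x)$ again lie in a finite union of $\Gamma$-cosets; taking $t_x'=1/2$, for instance, produces the midpoint set, and demanding that it lie in finitely many cosets is precisely the strictly stronger midpoint-blockability condition, which is known to fail for the spaces studied here. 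Adjoining finitely many extra coset-points to $B_0$ is likewise not justified: there may be infinitely many bad $x$, and nothing in your argument bounds the collection of cosets needed to catch them all. As written, the converse is therefore unproved. To close it you would need to actually construct the replacement times or cosets (or reproduce the argument from Gutkin's Proposition 2); alternatively, for the purposes of this paper you could state and prove only the one-directional implication, which suffices for all of Theorems \ref{thm-main}, \ref{thm-Mn}, and \ref{thm-mainco}.
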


The following lemma relates blocking property of a homogeneous space and its closed subspaces. The proof is straightforward and is left to the reader. 
\begin{lem}
Let $G$ be a Lie group, and let $\Gamma \subset G$ be a lattice. Let $H \subset G$ be a closed subgroup such that $\Gamma\cup H$ is a lattice in $H$. Let $X=G/\Gamma, Y=H/(\Gamma\cup H)$ be the homogeneous spaces, and let $Y\subset X$ be the natural inclusion. 
\begin{enumerate}[label=\roman*)]
\item If $Y$ is not blockable (resp. not midpoint blockable, etc) then $X$ is not blockable (resp. not midpoint blockable, etc).
\item If $Y$ contains a point which is not blockable (resp. not midpoint blockable) away from itself, then no point in $X$ is blockable (resp. not midpoint blockable) away from itself.
\end{enumerate}
\end{lem}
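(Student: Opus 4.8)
The plan is to realize $Y$ as an $H$-invariant subset of $X$ and then exploit the single structural fact that every connecting curve lying in $Y$ is automatically a connecting curve of $X$; consequently any finite $X$-blocking set can be cut down to a finite $Y$-blocking set simply by intersecting with $Y$. First I would set up the embedding. Writing $\Gamma_H=\Gamma\cap H$ (the subgroup the hypothesis asserts to be a lattice in $H$), the assignment $h\Gamma_H\mapsto h\Gamma$ identifies $Y=H/\Gamma_H$ with the orbit $H\cdot m_0\subset X$, where $m_0=\Gamma$. This map $\iota\colon Y\to X$ is injective, since $h_1\Gamma=h_2\Gamma$ with $h_1,h_2\in H$ forces $h_2^{-1}h_1\in\Gamma\cap H=\Gamma_H$, and it is $H$-equivariant by construction. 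Because the exponential map of $H$ is the restriction of $\exp\colon\mathfrak{G}\to G$ to the subalgebra $\mathfrak{H}$, a connecting curve $c(t)=\exp(tx)\cdot m$ of $Y$ (with $x\in\mathfrak{H}$) becomes, under $\iota$, the connecting curve $\exp(tx)\cdot\iota(m)$ of $X$ generated by the very same one-parameter subgroup. Hence each connecting curve of $Y$ is a connecting curve of $X$ whose image lies entirely inside $Y$.

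For claim (i) I would argue the contrapositive: assume $X$ is blockable and deduce that $Y$ is. Fix $m_1,m_2\in Y$; viewed in $X$ they admit a finite blocking set $B\subset X\setminus\{m_1,m_2\}$. I claim $B\cap Y$ blocks $m_1,m_2$ inside $Y$. Indeed, any connecting curve $c$ of $Y$ is a connecting curve of $X$, so $c([0,1])\cap B\neq\emptyset$; but $c([0,1])\subset Y$, whence $c([0,1])\cap B=c([0,1])\cap(B\cap Y)\neq\emptyset$. Since $B\cap Y$ is finite and avoids $m_1,m_2$, it is a genuine blocking set in $Y$. The midpoint variant is even more immediate: the set of midpoints $\{c(1/2):c\in C_{m_1,m_2}(Y)\}$ is contained in the corresponding set for $X$, so finiteness of the latter forces finiteness of the former; the uniform version follows from $|B\cap Y|\le|B|$.

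Claim (ii) is where homogeneity enters. The property ``$m$ is blockable away from itself,'' i.e. blockability of the pair $(m,m)$, is invariant under the $G$-action, because each $g\in G$ is a diffeomorphism of $X$ taking connecting curves to connecting curves (if $c(t)=\exp(tv)\cdot m$ then $g\cdot c(t)=\exp(t\,\mathrm{Ad}(g)v)\cdot(g\cdot m)$) and carrying blocking sets to blocking sets. As $G$ acts transitively on $X$, either every point of $X$ is blockable away from itself or none is, and likewise for $Y$ under the transitive $H$-action. It therefore suffices to exhibit a single non-self-blockable point of $X$. Take the point $m\in Y$ that is not blockable away from itself in $Y$. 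If $m$ were self-blockable in $X$, a finite blocking set $B$ for the $X$-loops at $m$ would, by the same intersection argument as in (i), yield the finite blocking set $B\cap Y$ for the $Y$-loops at $m$, contradicting the hypothesis. Hence $m$ is not self-blockable in $X$, and by transitivity no point of $X$ is.

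The argument is bookkeeping once the embedding is fixed, so I do not expect a genuine obstacle; the only points requiring care are establishing that $\iota$ is $H$-equivariant together with $\exp_H=\exp|_{\mathfrak{H}}$ (this is exactly what makes $Y$-connecting curves into $X$-connecting curves, and hence what makes the intersection $B\cap Y$ work), and, in (ii), invoking transitivity of the two actions to promote a one-point statement to all points. Both are standard, which is presumably why the author leaves the proof to the reader.
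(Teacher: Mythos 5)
Your argument is correct and is exactly the standard one the paper has in mind when it declares the proof ``straightforward and left to the reader'': connecting curves of $Y$ are connecting curves of $X$ lying entirely in $Y$, so a finite blocking (or midpoint) set for $X$ restricts to one for $Y$ via $B\cap Y$, and in part (ii) the transitive $G$-action, which carries connecting curves to connecting curves, promotes the single non-self-blockable point of $Y\subset X$ to every point of $X$. I see no gaps.
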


\subsection{One parameter families of $\textrm{SL}(2,\mathds{R})$ and modified times}

In this section we derive an explicit formula for one parameter families in $\textrm{SL}(2,\mathds{R})$, which is  essential to study its blocking properties.

The exponential map for $\textrm{SL}(2,\mathds{R})$ can be formulated in terms of trigonometric functions. The formula is directly derived from the exponential power series $\exp(X)=\sum_{k=0}^{\infty} X^k/k!$, doing some matrix algebra. For details see \cite[pp. 17-19]{Lie Groups-An Intro}. We have the following proposition:
\begin{prop}\label{prop:exponential} Let $g_0$ denote the identity element of $\textrm{SL}(2,\mathds{R})$. For a given matrix

\begin{equation}
X
=\begin{pmatrix} a & b \\
c &-a  \\
\end{pmatrix} \in \mathfrak{sl}(2,\mathds{R}),
\end{equation}

if $a^2+b>0$, then let $\omega(X)=\sqrt{a^2+bc}>0$ and if $a^2+b<0$, then let $\omega(X)=\sqrt{-(a^2+bc)}>0$. In the first case we have 
\begin{equation}\label{eq:expb}
\textrm{exp}(X)= (\cosh \omega) \, g_0 + \left(\dfrac{\sinh \omega}{\omega}\right) X,
\end{equation}
and in the second case $(a^2+bc<0)$, we have 
\begin{equation}
\textrm{exp}(X)= (\cos \omega) \, g_0 + \left(\dfrac{\sin \omega}{\omega}\right) X.
\end{equation}
If $a^2+bc=0$, then $\textrm{exp}(X)=g_0+X$. Furthermore, every matrix $g \in \textrm{SL}(2,\mathds{R})$ whose trace satisfies 
$tr(g) \geq -2$ is in the image of the exponential map. Consequently, for any $g \in \textrm{SL}(2,\mathds{R})$, either $g$ or $-g$ is of the form $\textrm{exp}(X)$, for some $X \in \mathfrak{sl}(2,\mathds{R})$. Therefore, $(\textrm{SL}(2,\mathds{R}),\textrm{SL}(2,\mathds{Z}))$ is of exponential type.
\end{prop}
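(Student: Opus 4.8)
The plan is to derive the three formulas from the Cayley--Hamilton theorem and then invert the exponential map by a case analysis on the trace. First I would observe that for $X=\begin{pmatrix} a & b \\ c & -a\end{pmatrix}\in\mathfrak{sl}(2,\mathds{R})$ one has $\textrm{tr}(X)=0$ and $\det(X)=-(a^2+bc)$, so the characteristic polynomial is $\lambda^2-(a^2+bc)$, and Cayley--Hamilton gives
\[
X^2=(a^2+bc)\,g_0.
\]
Writing $\delta=a^2+bc$, every even power is $X^{2k}=\delta^k g_0$ and every odd power is $X^{2k+1}=\delta^k X$. Substituting into $\exp(X)=\sum_k X^k/k!$ and separating even from odd indices, I would factor out $g_0$ and $X$ and recognize the two resulting scalar series: when $\delta>0$ they are the Taylor series of $\cosh\omega$ and $\omega^{-1}\sinh\omega$ with $\omega=\sqrt{\delta}$; when $\delta<0$ a sign $(-1)^k$ appears and they become $\cos\omega$ and $\omega^{-1}\sin\omega$ with $\omega=\sqrt{-\delta}$; and when $\delta=0$ all powers $X^k$ with $k\geq 2$ vanish, giving $\exp(X)=g_0+X$. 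This yields the three displayed formulas.

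Next I would compute the trace of an arbitrary image element: since $\textrm{tr}(X)=0$, the formulas give $\textrm{tr}(\exp(X))=2\cosh\omega\geq 2$, or $2\cos\omega\in[-2,2]$, or $2$, so in every case $\textrm{tr}(\exp(X))\geq -2$. For the converse I would fix $g\in\textrm{SL}(2,\mathds{R})$ with $\tau=\textrm{tr}(g)$ and exhibit a preimage by solving the formulas for $X$ as an affine function of $g$; the key leverage is that $g$ satisfies $g^2-\tau g+g_0=0$. If $\tau>2$, write $\tau=2\cosh\omega$ with $\omega>0$ and set $X=\frac{\omega}{\sinh\omega}(g-\cosh\omega\,g_0)$; this $X$ is trace-free, and the relation $g^2=\tau g-g_0$ gives $(g-\cosh\omega\,g_0)^2=\sinh^2\omega\,g_0$, hence $X^2=\omega^2 g_0$ and $\exp(X)=g$ by the first formula. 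If $-2<\tau<2$, the same device with $\tau=2\cos\omega$, $\omega\in(0,\pi)$, and $X=\frac{\omega}{\sin\omega}(g-\cos\omega\,g_0)$ yields $X^2=-\omega^2 g_0$ and $\exp(X)=g$. If $\tau=2$, then $g-g_0$ is trace-free and nilpotent, since $(g-g_0)^2=(\tau-2)g=0$, so $X=g-g_0$ falls in the $\delta=0$ case and $\exp(X)=g_0+(g-g_0)=g$.

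The one genuinely subtle point, and the only place the literal statement must be read with care, is the boundary $\tau=-2$: the trace computation forces $\cos\omega=-1$ and hence $\sin\omega=0$, so the unique image element of trace $-2$ is $-g_0$, and a nondiagonalizable matrix of trace $-2$ is \emph{not} a genuine exponential. This is harmless for the intended conclusion, which I would route through the passage to $\pm g$. For any $g$, at least one of $\textrm{tr}(g)$ and $\textrm{tr}(-g)=-\textrm{tr}(g)$ exceeds $-2$, and if $\textrm{tr}(g)=-2$ then $\textrm{tr}(-g)=2$; so by the cases above either $g$ or $-g$ lies in the image of $\exp$. Finally, since $-g_0=-I\in\textrm{SL}(2,\mathds{Z})=\Gamma$, the cosets $g\Gamma$ and $(-g)\Gamma$ coincide, so every coset $g\Gamma$ equals $\exp(X)\Gamma$ for some $X$; thus $\exp_\Gamma$ is surjective and $(\textrm{SL}(2,\mathds{R}),\textrm{SL}(2,\mathds{Z}))$ is of exponential type.
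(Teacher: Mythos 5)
Your proof is correct and follows essentially the route the paper intends: the paper gives no argument of its own, delegating the power-series computation to Rossmann, and your derivation via Cayley--Hamilton ($X^2=(a^2+bc)g_0$ for traceless $X$) followed by splitting the series into even and odd parts is exactly that computation, while your explicit inverse $X=\frac{\omega}{\sinh\omega}(g-\cosh\omega\,g_0)$ matches the formula \eqref{eq:logg} the paper uses later. One genuinely valuable point in your write-up: you correctly observe that the literal claim ``$\mathrm{tr}(g)\geq -2$ implies $g$ is in the image of $\exp$'' fails for non-diagonalizable matrices of trace exactly $-2$ (only $-g_0$ among trace-$(-2)$ matrices is an exponential), a slip in the paper's statement; your repair via passing to $-g$ and using $-g_0\in\mathrm{SL}(2,\mathds{Z})$ shows the conclusion about exponential type is unaffected.
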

\par
For $g \in G_2$ with $tr(g) \geq 2$, $\log(g)$ is unique and we use the notations
$\omega_g=\omega(\log(g)),\, g^t=\exp{(t\log g)},\, 0 \leq t \leq 1$. We have the following lemma:
\begin{lem}\label{lem-per}
If $tr(g) \geq 2$, we have

\begin{equation}
g^t=\left( \cosh(t\omega_g)-\dfrac{\sinh{t\omega_g}}{\sinh{\omega_g}}\cosh \omega_g \right)g_0+\dfrac{\sinh{t\omega_g}}{\sinh{\omega_g}}g, 
\end{equation} where $g_0$ is the identity element of $G_2$.
\end{lem}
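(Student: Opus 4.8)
The plan is to reduce everything to two applications of the exponential formula \eqref{eq:expb} from Proposition~\ref{prop:exponential}. Since $\operatorname{tr}(g) > 2$ places $X := \log g$ in the hyperbolic regime $a^2+bc>0$, I would first record that $\omega_g = \omega(X) = \sqrt{a^2+bc} > 0$ and that, by \eqref{eq:expb},
\[
g = \exp(X) = (\cosh\omega_g)\, g_0 + \frac{\sinh\omega_g}{\omega_g}\, X.
\]
Writing this out also confirms $\operatorname{tr}(g) = 2\cosh\omega_g$, since $g_0$ has trace $2$ and $X$ is traceless; thus the hypothesis $\operatorname{tr}(g) \geq 2$ is exactly the condition $\cosh\omega_g \geq 1$.

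Next I would compute $\omega(tX)$. For $t \in [0,1]$ one has $\omega(tX) = \sqrt{t^2(a^2+bc)} = t\,\omega_g$ because $t \geq 0$, so in particular $tX$ stays in the hyperbolic case. Applying \eqref{eq:expb} once more to $tX$ gives
\[
g^t = \exp(tX) = (\cosh(t\omega_g))\, g_0 + \frac{\sinh(t\omega_g)}{t\omega_g}\,(tX) = (\cosh(t\omega_g))\, g_0 + \frac{\sinh(t\omega_g)}{\omega_g}\, X,
\]
so the whole problem is now to eliminate $X$ in favour of $g$ and $g_0$.

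The elimination is the only substantive step: solving the first displayed equation for $X$ yields
\[
X = \frac{\omega_g}{\sinh\omega_g}\bigl(g - (\cosh\omega_g)\, g_0\bigr),
\]
which is legitimate precisely because $\sinh\omega_g \neq 0$ in the hyperbolic case. Substituting this into the expression for $g^t$ and collecting the coefficient of $g_0$ produces the claimed identity after routine simplification. I do not expect any genuine obstacle here, since everything is linear in $X$ and the factors $\omega_g/\sinh\omega_g$ cancel cleanly. The one point worth flagging is the boundary case $\operatorname{tr}(g) = 2$, where $\omega_g = 0$ and the quotient $\sinh(t\omega_g)/\sinh\omega_g$ becomes $0/0$; here one reads the formula as its limit $\omega_g \to 0$, which recovers the parabolic value $g^t = (1-t)\,g_0 + t\,g$, consistent with $\exp(tX) = g_0 + tX$ in that degenerate case.
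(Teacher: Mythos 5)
Your argument is correct and is essentially identical to the paper's proof: both apply \eqref{eq:expb} to solve for $\log g$ as $\frac{\omega_g}{\sinh\omega_g}(g-\cosh(\omega_g)g_0)$, use $\omega(t\log g)=t\,\omega_g$, and substitute back into the exponential formula for $\exp(t\log g)$. Your extra remark about the degenerate case $\operatorname{tr}(g)=2$ is a sensible clarification the paper leaves implicit, but the substance of the proof is the same.
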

\begin{proof}
From \eqref{eq:expb} it follows that 
\begin{equation}\label{eq:logg}
\log g= \dfrac{\omega_g}{\sinh \omega_g}\left( g-\cosh(\omega_g)g_0 \right)
\end{equation}
Noting that $\omega(t\log(g))=t\omega(\log(g))$, substituting \eqref{eq:logg} in the equation 
\begin{equation*}
g^t=\exp{(t\log g)}= \cosh(t\omega_g)g_0+\dfrac{\sinh{t\omega_g}}{t\omega_g}(t\log g)
\end{equation*}
gives the desired formula.

\end{proof}

For a fixed $g$ and an arbitrary $\gamma \in \Gamma$, as long as $g$ is known in the context, we use the notation $\lambda_{\gamma}=\dfrac{\sinh (t\omega_{g\gamma})}{\sinh (\omega_{g\gamma})},\ 0\ \leq \lambda_{\gamma} \leq 1$. We will call $\lambda_{\gamma}$'s modified times.
From \eqref{eq:expb} we have $\cosh \omega_{\gamma}=\textrm{tr}(g\gamma)/2$; a direct computation from \eqref{eq:logg} gives the following formula. 
\begin{equation}\label{ggammat}
(g\gamma)^t=\left[ a(\lambda_{\gamma})-1/2\textrm{tr}(g\gamma)\lambda_{\gamma} \right]g_0+\lambda_{\gamma}g\gamma, 
\end{equation} where 
\begin{equation}\label{alambdagamma}
a(\lambda_{\gamma})=\left( 1+ \left( \textrm{tr}(g\gamma)^2/4-1 \right)\lambda_{\gamma}^2  \right)^{1/2}
\end{equation}
Modified time as defined in above, will be pivotal for the proof of the main theorem. 

\begin{notation}
While working with a sequence $\{\gamma_{i}\} \in \Gamma$, by $\lambda_i, a(\lambda_i)$ we mean $\lambda_{\gamma_i}, a(\lambda_{\gamma_i})$. 
\end{notation}

\section{Blocking properties of $M_n$}

This section concludes with the proof of Theorem \ref{thm-main}. The proof will be based on the technical Proposition \ref{prop:lambda}, which is the main body of this section.  

\par
Throughout the section, $\Gamma=\textrm{SL}(2,\mathds{Z})$, $M_2=\textrm{SL}(2,\mathds{R})/\Gamma$. We assume:
$$g =\begin{pmatrix} x & y \\
z & w  \\ \end{pmatrix} \in \textrm{SL}(2,\mathds{Q}),\  \{ \gamma_{i} \} \subset \Gamma,\  g\gamma_{i}= \begin{pmatrix} x_{i} & y_{i} \\
z_i & w_i  \\ \end{pmatrix}\, .$$ Moreover, since $g$ and $-g$ have identical blocking properties, we may assume $x>0$.

\par
In order to prove Proposition ~\ref{prop:lambda}, we first need a few Lemmas.

\begin{lem}\label{polynomial3}
Suppose that $R(x,y)\in \mathds{R}[x,y]$ has the form $R(x,y)=cx^n+P(x,y),\, n>0,\, c\neq 0$, where $P(x,y)$ is of degree of at most $n-1$ in $x$. Then given any sequence of positive real numbers $\{y_i\}$ such that $y_i \rightarrow \infty$, as $i \rightarrow \infty$, there exists an increasing function $f: \mathds{Z}^+ \rightarrow \mathds{Z}^+$ such that $R(y_{f(i)},y_{f(j)})\neq 0,\, \forall i\neq j$.
\end{lem}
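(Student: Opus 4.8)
The plan is to build the selecting function $f$ greedily, choosing one index at a time, so that at each stage only finitely many candidate values are forbidden while infinitely many remain available because $y_i \to \infty$. The construction will be strictly increasing, so $f$ genuinely extracts a subsequence.

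First I would record the two elementary facts about $R$ that drive the argument. Writing $R(x,y) = \sum_{j \ge 0} Q_j(x)\, y^j$ with $Q_j \in \mathds{R}[x]$, the hypothesis $R = cx^n + P$ with $\deg_x P \le n-1$ forces the constant-in-$y$ coefficient to be $Q_0(x) = cx^n + (\text{lower order in } x)$, a polynomial of degree exactly $n$ since $c \neq 0$. Two consequences follow: (a) for every fixed $b \in \mathds{R}$ the one-variable polynomial $R(x,b) = cx^n + P(x,b)$ has degree exactly $n$, hence at most $n$ real roots; and (b) the degenerate set $\mathcal{B} = \{a \in \mathds{R} : R(a,y) \equiv 0 \text{ in } y\}$ is finite, because $R(a,y)\equiv 0$ forces $Q_0(a) = 0$ and $Q_0$ has only $n$ roots. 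Fact (b) is the key technical point: it is exactly what guarantees that the second slot of $R$ cannot degenerate for more than finitely many values.

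Next, since $y_i \to \infty$ and $\mathcal{B}$ is finite (hence bounded), there is an index $N_0$ with $y_i \notin \mathcal{B}$ for all $i \ge N_0$, and I would only ever select indices $\ge N_0$. Then I would run the induction. Suppose $f(1) < \cdots < f(k)$ have been chosen with the desired pairwise property, and write $z_j = y_{f(j)}$. To choose $f(k+1)$ I must avoid two finite families of real values: from the requirement $R(\,\cdot\,, z_j) \neq 0$ I exclude the at most $n$ roots of each degree-$n$ polynomial $R(x, z_j)$ by fact (a); and from $R(z_j, \,\cdot\,) \neq 0$ I exclude the finitely many roots of each $R(z_j, y)$, which is legitimate precisely because $z_j \notin \mathcal{B}$ makes $R(z_j, y)$ a nonzero polynomial in $y$ by fact (b). The union of these bad values over $j \le k$ is a finite set $\mathcal{S}$; since $y_i \to \infty$, only finitely many indices $i$ satisfy $y_i \in \mathcal{S}$, so I may pick $f(k+1) > f(k)$ with $y_{f(k+1)} \notin \mathcal{S}$. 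By construction $R(z_{k+1}, z_j) \neq 0$ and $R(z_j, z_{k+1}) \neq 0$ for all $j \le k$, completing the induction and yielding $R(y_{f(i)}, y_{f(j)}) \neq 0$ for all $i \neq j$.

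I expect the only genuine obstacle to be the asymmetry between the two arguments of $R$: the first slot is controlled for free by the degree-$n$ leading term $cx^n$, but the second slot can vanish identically, and the whole construction hinges on isolating the finite exceptional set $\mathcal{B}$ and discarding it at the outset via $y_i \to \infty$. Once that is done, everything reduces to a routine greedy selection against a finite forbidden set.
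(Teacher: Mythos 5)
Your proof is correct and follows the same greedy inductive scheme as the paper: at each stage the already-selected values forbid only finitely many real numbers, and $y_i \to \infty$ guarantees an admissible larger index, so the selection can always continue. In fact your argument is more complete than the paper's own. The paper's induction excludes only the roots of the degree-$n$ polynomials $R(x,y_{f(1)}),\dots,R(x,y_{f(k)})$ when choosing $f(k+1)$, which secures $R(y_{f(i)},y_{f(j)})\neq 0$ only for $i>j$; nothing in that construction prevents $R(y_{f(j)},y_{f(k+1)})=0$ for some $j\le k$, even though the statement asserts nonvanishing for all $i\neq j$ and $R$ is not symmetric. Your extra step --- showing the degenerate set $\mathcal{B}=\{a\in\mathds{R}: R(a,\cdot)\equiv 0\}$ is finite via the coefficient $Q_0(x)=cx^n+\cdots$, discarding the finitely many indices with $y_i\in\mathcal{B}$, and then also excluding the roots of each nonzero polynomial $R(z_j,y)$ --- is exactly what is needed to control the second slot and prove the two-sided claim as written. (In the paper's application to $\det B(i,j)$ only one ordering of each pair is actually used, so the weaker one-sided version would suffice there, but your version establishes the lemma as stated.)
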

\begin{proof}
Define $f: \mathds{Z}^+ \rightarrow \mathds{Z}^+$ inductively as follows. Set $f(1)=1$, and assuming 
$f(k)$ is defined, define $f(k+1)$ in the following way. The $k$-polynomials $R_1(x)=R(x,y_{f(1)}),\cdots,R_k(x)=R(x,y_{f(k)})$ are all degree $n$ in $x$. Choose $l$ large enough so that $R_1(y_l),\cdots,R_k(y_l)\neq 0$, and define $f(k+1)=l$.
\end{proof}

\begin{lem}\label{gamma-lin1}
Let $(g\gamma_{1})^{t_{1}},\cdots, (g\gamma_{n})^{t_{n}}$ be $\mathds{Z}$-linearly dependent, that is \begin{equation}\label{mgamma} \sum_{i=1}^n m_i (g\gamma_{i})^{t_{i}}=0,\ m_i\in \mathds{Z} \, . \end{equation} Then we have  $\sum_{i=1}^n m_i \lambda_i = 0$ and $\sum_{i=1}^n m_i a(\lambda_i)=0$.
\end{lem}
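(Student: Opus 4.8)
The plan is to insert the closed form \eqref{ggammat} into the relation \eqref{mgamma} and then read off both identities by splitting each term along the decomposition of the $2\times 2$ real matrices into scalar and trace-free parts. Writing $(g\gamma_i)_0 := g\gamma_i - \tfrac12\textrm{tr}(g\gamma_i)\,g_0$ for the trace-free part of $g\gamma_i$, formula \eqref{ggammat} regroups as
\[
(g\gamma_i)^{t_i} = a(\lambda_i)\,g_0 + \lambda_i\,(g\gamma_i)_0,
\]
so that $a(\lambda_i)\,g_0$ is the scalar part of $(g\gamma_i)^{t_i}$ and $\lambda_i\,(g\gamma_i)_0$ its trace-free part. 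Since the algebra of $2\times 2$ real matrices is the direct sum $\mathds{R}\,g_0 \oplus \mathfrak{sl}(2,\mathds{R})$ of scalar and trace-free matrices, the single relation \eqref{mgamma} separates into one scalar and one trace-free equation.

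The identity $\sum_i m_i a(\lambda_i) = 0$ is the easy half: projecting \eqref{mgamma} onto the scalar summand --- equivalently, taking the trace and halving, since $\textrm{tr}\big((g\gamma_i)^{t_i}\big) = 2a(\lambda_i)$ --- gives it at once. The trace-free summand of \eqref{mgamma} reads $\sum_i m_i\lambda_i\,(g\gamma_i)_0 = 0$, which in coordinates is the system $\sum_i m_i\lambda_i y_i = 0$, $\sum_i m_i\lambda_i z_i = 0$, and $\sum_i m_i\lambda_i(x_i - w_i) = 0$.

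I expect $\sum_i m_i\lambda_i = 0$ to be the main obstacle, since it is not a formal consequence of \eqref{mgamma}. No fixed linear functional on $2\times 2$ matrices returns $\lambda_i$ uniformly in $i$: the quantity $\lambda_i$ records the size of the trace-free part $\lambda_i(g\gamma_i)_0$ relative to $(g\gamma_i)_0$, and this reference matrix changes with $\gamma_i$. Hence $\sum_i m_i\lambda_i\,(g\gamma_i)_0 = 0$ only furnishes a dependence among the trace-free parts weighted by $m_i\lambda_i$, and I do not see how to collapse it to $\sum_i m_i\lambda_i = 0$ by linear algebra alone. I would in fact worry that the bare identity can fail when two of the $g\gamma_i$ lie on a common one-parameter subgroup --- already $\gamma$ and $\gamma^2$ appear to yield a relation with $\sum_i m_i\lambda_i \neq 0$ --- so the argument should invoke genuine extra input: passing to a subsequence via Lemma \ref{polynomial3} to control the off-diagonal entry $y_i$ (a polynomial in the entries of $\gamma_i$) and then exploiting $\sum_i m_i\lambda_i y_i = 0$, or else restricting attention to families $\{g\gamma_i\}$ on pairwise distinct one-parameter subgroups. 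Pinning down the normalisation that forces $\sum_i m_i\lambda_i = 0$ is, to my mind, the crux of the lemma.
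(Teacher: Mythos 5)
Your derivation of $\sum_i m_i a(\lambda_i)=0$ is correct and coincides with the paper's: the paper adds the two diagonal equations rather than phrasing it as a projection onto the scalar summand, but since $\textrm{tr}\big((g\gamma_i)^{t_i}\big)=2a(\lambda_i)$ this is the same computation. You have also correctly isolated the crux: $\sum_i m_i\lambda_i=0$ is not a formal consequence of \eqref{mgamma} for an arbitrary family $\{g\gamma_i\}$, and the equation that must deliver it is the $(1,2)$-entry relation $\sum_i m_i\lambda_i y_i=0$.

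What you are missing is that the required normalisation is already built into the standing setup of Section 3, so the lemma does not have to manufacture it. By Lemma \ref{lem-cos} the representative $g$ is taken lower triangular, with $(1,2)$ entry $0$ and $(1,1)$ entry $x>0$, and in Proposition \ref{prop:lambda} every $\gamma_i$ has $(1,2)$ entry equal to $1$. Hence the $(1,2)$ entry of $g\gamma_i$ is $y_i=x\cdot 1+0\cdot s_i=x$ for every $i$: all the $y_i$ equal the same nonzero constant. The $(1,2)$ component of \eqref{mgamma} then reads $x\sum_i m_i\lambda_i=0$, and dividing by $x\neq 0$ gives $\sum_i m_i\lambda_i=0$. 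This is exactly the one-line step in the paper's proof (``Since $y_1=\cdots=y_n=x$\,''). No appeal to Lemma \ref{polynomial3}, to passing to subsequences, or to distinct one-parameter subgroups is needed. Your suspicion that the identity can fail without some hypothesis is a fair criticism of the lemma as phrased in isolation, but the hypothesis is supplied by the context in which the lemma is applied, and your proof is incomplete until you invoke it.
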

\begin{proof}

By \eqref{ggammat} we have 
$$(g\gamma_{i})^{t_{i}}= \begin{pmatrix} a(\lambda_i)+1/2 \lambda_i(x_i-w_i) & \lambda_iy_i \\
\lambda_iz_i &  a(\lambda_i)+1/2 \lambda_i(w_i-x_i) \\
\end{pmatrix}. $$ 
Now \eqref{mgamma} implies

\begin{equation}\label{mgamma1}
\sum_{i=1}^n m_i \lambda_i y_i= 0\, ,
\end{equation}

\begin{equation}\label{mgamma2}
\sum_{i=1}^n m_i a(\lambda_i)+1/2 \lambda_i(x_i-w_i)= 0\, ,
\end{equation}

\begin{equation}\label{mgamma3}
\sum_{i=1}^n m_i a(\lambda_i)+1/2 \lambda_i(w_i-x_i)= 0\, .
\end{equation}

Since $y_1=\cdots=y_n=x$, \eqref{mgamma1} immediately implies $\sum_{i=1}^n m_i \lambda_i = 0$. To obtain $\sum_{i=1}^n m_i a(\lambda_i)=0$, add \eqref{mgamma2} and \eqref{mgamma3}.

\end{proof}

\begin{lem}\label{gamma-lin2} For  a given element $g\in \textrm{SL}(2,\mathds{R})$:
\begin{enumerate}[label=\roman*)]
\item Every five elements of coset $g\Gamma$ are $\mathds{Z}$ linearly dependent. 

\item Let $g\gamma_1,\cdots, g\gamma_n,\, n \leq 4$, be $\mathds{Z}$(or $\mathds{Q}$)-linearly independent elements of $g\Gamma$. Then there exists a non-zero integer $m_0$ such that for every $g\gamma \in span_{\mathds{Q}}<g\gamma_1,\cdots,g\gamma_n>$, there exists $(m_1,\cdots, m_n)\in \mathds{Z}^n$ so that $\sum_{i=1}^n m_i (g\gamma_i)=m_0(g\gamma)$.
\end{enumerate}
\end{lem}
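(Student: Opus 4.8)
The plan is to transport both claims back to the integer lattice $\Gamma\subset M_2(\mathds{Z})$ using that left multiplication by $g$ is an invertible $\mathds{R}$-linear endomorphism of the four-dimensional space $M_2(\mathds{R})$ of $2\times2$ matrices. Because $g$ is invertible, a relation $\sum_i m_i(g\gamma_i)=m_0(g\gamma)$ holds if and only if $\sum_i m_i\gamma_i=m_0\gamma$ (the case $m_0=0$ giving linear dependence), and $g\gamma\in\mathrm{span}_{\mathds{Q}}\langle g\gamma_1,\dots,g\gamma_n\rangle$ is equivalent, after applying $g^{-1}$, to $\gamma\in V:=\mathrm{span}_{\mathds{Q}}\langle\gamma_1,\dots,\gamma_n\rangle$. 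So both parts reduce to statements about integer matrices alone, valid for any $g\in\mathrm{SL}(2,\mathds{R})$.

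For claim (i) I would argue that the five integer matrices $\gamma_1,\dots,\gamma_5$ live in $M_2(\mathds{Z})$, which spans the four-dimensional $\mathds{Q}$-vector space $M_2(\mathds{Q})$; hence they are $\mathds{Q}$-linearly dependent, and clearing denominators in such a $\mathds{Q}$-relation produces a nontrivial $\mathds{Z}$-relation $\sum m_i\gamma_i=0$, equivalently $\sum m_i(g\gamma_i)=0$.

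For claim (ii) the reduction leaves me to show that if $\gamma_1,\dots,\gamma_n$ ($n\le4$) are $\mathds{Q}$-independent integer matrices, then some nonzero integer $m_0$ satisfies $m_0\gamma\in\bigoplus_i\mathds{Z}\gamma_i$ for all $\gamma\in\Gamma\cap V$. I introduce the two $\mathds{Z}$-modules $L_0=\mathds{Z}\gamma_1+\dots+\mathds{Z}\gamma_n$ and $L=V\cap M_2(\mathds{Z})$, with $L_0\subseteq L$ since each $\gamma_i$ is an integer matrix lying in $V$. As a subgroup of the free abelian group $M_2(\mathds{Z})$ the module $L$ is free, and being squeezed between $L_0$, which has rank $n$, and $V$, which has $\mathds{Q}$-dimension $n$, it has rank exactly $n$. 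Therefore $L/L_0$ is finite; choosing $m_0=|L/L_0|$, Lagrange's theorem yields $m_0\gamma\in L_0$ for every $\gamma\in L$, in particular for every $\gamma\in\Gamma\cap V$. Writing $m_0\gamma=\sum_i m_i\gamma_i$ with $m_i\in\mathds{Z}$ and multiplying through by $g$ gives the required identity $\sum_i m_i(g\gamma_i)=m_0(g\gamma)$.

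The substantive point is the structural fact that $L=V\cap M_2(\mathds{Z})$ is a full-rank sublattice containing $L_0$ with finite index; everything else is bookkeeping. The step that needs care is establishing $\mathrm{rank}(L)=n$, via the two squeezing bounds $\mathrm{rank}(L)\ge\mathrm{rank}(L_0)=n$ and $\mathrm{rank}(L)\le\dim_{\mathds{Q}}V=n$, since only then is the index $[L:L_0]$ finite and the single constant $m_0$ independent of the chosen $\gamma$.
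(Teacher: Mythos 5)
Your proposal is correct, and part (i) follows the paper's own argument essentially verbatim (reduce to $\Gamma$ by applying $g^{-1}$, then note that five vectors in the four\mbox{-}dimensional space $M_2(\mathds{Q})$ are $\mathds{Q}$-dependent and clear denominators). For part (ii), however, you take a genuinely different route. The paper works explicitly with the $4\times n$ coordinate matrix $A=([\gamma_1]\cdots[\gamma_n])$: it extracts an invertible $n\times n$ row-submatrix $\tilde A$, takes $m_0$ to be an integer clearing the denominators of $\tilde A^{-1}$, solves $\tilde A[m]=m_0(\gamma^{i_1},\dots,\gamma^{i_n})^{T}$ in integers, and then checks by hand that the remaining rows of $A$ are consistent using the row-dependence relations. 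You instead argue structurally: the saturation $L=V\cap M_2(\mathds{Z})$ is a free abelian group squeezed between $L_0=\bigoplus_i\mathds{Z}\gamma_i$ (rank $n$) and $V$ (dimension $n$), hence has rank exactly $n$, so $[L:L_0]$ is finite and $m_0=|L/L_0|$ works by Lagrange. Both arguments are sound and both produce a single $m_0$ uniform over all $\gamma$ in the span; the paper's version is elementary, explicit linear algebra that exhibits a concrete $m_0$ (a common denominator of the entries of $\tilde A^{-1}$), while yours is shorter conceptually, avoids the row-consistency check entirely, and generalizes at once to any full-rank sublattice situation, at the cost of invoking the structure theory of finitely generated abelian groups (finiteness of the index of a full-rank sublattice). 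The one step you should make explicit is the standard fact that two free abelian groups of equal finite rank with $L_0\subseteq L$ have finite quotient (e.g.\ by tensoring the exact sequence with $\mathds{Q}$), since that is exactly where the finiteness of $m_0$ comes from.
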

\begin{proof}
To prove the first part note that  $g\gamma \in \textrm{span}_{\mathds{Q}}<g\gamma_1,\cdots,g\gamma_n>$ if and only if $\gamma \in \textrm{span}_{\mathds{Q}}<\gamma_1,\cdots,\gamma_n>$; therefore we may assume $g\Gamma=\Gamma$. Considering $\Gamma$ as a subset of $\mathds{Q}$-vector space $\mathds{Q}^4$, immediately implies every five elements of it are $\mathds{Q}$ (and therefore  $\mathds{Z}$)-linearly dependent.
\par
Now we prove part \textit{ii)} of the Lemma. First a conventional notation; For $$\gamma=\begin{pmatrix} \gamma^{1} &  \gamma^{2}\\
\gamma^{3} & \gamma^{4}  \\ \end{pmatrix} \in \Gamma \, ,$$ 
define $[\gamma]=(\gamma^{1},\gamma^{2},\gamma^{3},\gamma^{4})^{T}$; moreover we use the notation $[a]=(a_1,\cdots,a_n)^{T}$, to denote an arbitrary element of $\mathds{R}^n$ as a $n \times 1$ matrix. Let $A=([\gamma_1] \cdots [\gamma_n])$. Note that $A$ is a $4\times n$ matrix  of rank $n$, thus there exists an invertible $n\times n$ submatrix $\tilde{A}$ consisting of rows, say, $i_1,\cdots,i_n$. Take an arbitrary element $\gamma \in \textrm{span}_{\mathds{Q}}<\gamma_1,\cdots,\gamma_n>$. Since $\tilde{A}^{-1}$ has rational entries, we can choose a fixed integer $m_0$ so that $m_0 \tilde{A}^{-1}$ has integer entries. Hence the linear equation $\tilde{A} [m]=m_0(\gamma^{i_1},\cdots,\gamma^{i_n})^{T}$ has a solution $[m]=(m_1,\cdots,m_n)^{T}\in \mathds{Z}^n$. For $1\leq j \leq 4,\, j \neq i_1,\cdots,i_n$, let $A_j$ denote the $j$-th row of $A$, and assume $A_j=\sum_{k=1}^n \alpha_{jk}A_{i_k}$. Since $\gamma \in \textrm{span}_{\mathds{Q}}<\gamma_1,\cdots,\gamma_n>$, there exists $[r] \in \mathds{Q}^n$ such that $[\gamma]=A[r]$. It follows that: 
\begin{equation*}
\gamma^{j}=A_j[r]=(\sum_{k=1}^n \alpha_{jk}A_{i_k})[r]=\sum_{k=1}^n \alpha_{jk}(A_{i_k}[r])=\sum_{k=1}^n \alpha_{jk}\gamma^{i_k}\, .
\end{equation*}
Hence we have:
\begin{equation*}\label{Ajm}
A_j [m] = (\sum_{k=1}^n \alpha_{jk}A_{i_k}) [m] = \sum_{k=1}^n \alpha_{jk}(A_{i_k}[m])
        = \sum_{k=1}^n \alpha_{jk} m_0\gamma^{i_k} = m_0\gamma^{j}\, .
\end{equation*}
Therefore we conclude $A [m]=m_0 [\gamma]$, that implies $m_0 \gamma=\sum_{i=1}^n m_i \gamma_i$.
\end{proof}

\begin{prop}\label{prop:lambda}
If $m=g\Gamma \in \textrm{SL}(2,\mathds{Q})/\Gamma$, $g=\begin{pmatrix} x & 0 \\
z & 1/x  \\ \end{pmatrix}$, is finitely blockable from identity $m_0$, then there exists a sequence $$\gamma_{i}= \begin{pmatrix} p_{i} & 1 \\
p_{i}s_{i}-1 & s_{i}  \\ 
\end{pmatrix} \in \Gamma$$ and a sequence of times $\{ t_{i} \} \subset (0,1)$ such that 
\begin{enumerate}[label=\roman*)]

\item all elements of $\{ (g\gamma_{i})^{t_i}\}$ belong to the same coset, and all modified times are the same, i.e., $\lambda_{i}=\lambda=\textrm{const}$,

\item $\lambda_{i}^2,\lambda_{i} a(\lambda_{i}) \in \mathds{Q}$,

\item $C_i=\textrm{tr}(g\gamma_{i})$ is an increasing sequence of positive rational numbers with the same denominator, $C_i \rightarrow \infty$, as $i \rightarrow \infty$, and

\item $\{p_{i} \}$ is an increasing sequence of positive integers. 
\end{enumerate}
\end{prop}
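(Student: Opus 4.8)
The plan is to turn the blocking hypothesis into a finite-coset statement through Proposition~\ref{prop:fcosets}, and then to exploit the rationality of $g$ together with the explicit formula \eqref{ggammat}. First I restrict attention to the connecting curves coming from the special form. For $\gamma=\left(\begin{smallmatrix}p&1\\ ps-1&s\end{smallmatrix}\right)\in\Gamma$ one computes $g\gamma=\left(\begin{smallmatrix}xp&x\\ zp+(ps-1)/x& z+s/x\end{smallmatrix}\right)$, so the $(1,2)$-entry of $g\gamma$ equals the constant $x$ and $\textrm{tr}(g\gamma)=xp+z+s/x$. Fixing one value of $s$ and letting $p\to\infty$ gives infinitely many such $\gamma$ with $\textrm{tr}(g\gamma)>2$, hence genuine hyperbolic connecting curves with well-defined $\lambda_\gamma,a(\lambda_\gamma)$ (Lemma~\ref{lem-per}). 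Since a finite blocking set meets all of these, Proposition~\ref{prop:fcosets} furnishes times $t_\gamma\in(0,1)$ with all points $(g\gamma)^{t_\gamma}$ contained in finitely many $\Gamma$-cosets; by pigeonhole infinitely many land in a single coset $h\Gamma$. Reindexing this subsequence produces $\gamma_i=\left(\begin{smallmatrix}p_i&1\\ p_is-1&s\end{smallmatrix}\right)$ with the ``same coset'' part of (i) and with $y_i=x$ constant, which is the hypothesis required by Lemma~\ref{gamma-lin1}. Thinning so that $p_i$ is strictly increasing gives (iv), and since $C_i=\textrm{tr}(g\gamma_i)=xp_i+(z+s/x)$ with $x>0$ and $x,z,s$ fixed rationals, the $C_i$ form an increasing sequence of positive rationals with a common denominator tending to $\infty$, which is (iii).

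Next I harvest linear relations among the modified times. Setting $\delta_i=h^{-1}(g\gamma_i)^{t_i}$, membership of each $(g\gamma_i)^{t_i}$ in $h\Gamma$ together with $\det=1$ forces $\delta_i\in\textrm{SL}(2,\mathds{Z})=\Gamma$, an integer matrix, i.e.\ a vector in $\mathds{Z}^4$. Fixing a maximal $\mathds{Q}$-independent subset of the $\delta_i$ and clearing denominators as in Lemma~\ref{gamma-lin2}(ii) yields, for every $i$, an integer relation $m_0'\delta_i=\sum_k m_{ik}\delta_{j_k}$; multiplying by $h$ gives the $\mathds{Z}$-dependence $m_0'(g\gamma_i)^{t_i}=\sum_k m_{ik}(g\gamma_{j_k})^{t_{j_k}}$. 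Lemma~\ref{gamma-lin1} then produces $m_0'\lambda_i=\sum_k m_{ik}\lambda_{j_k}$ and $m_0'a(\lambda_i)=\sum_k m_{ik}a(\lambda_{j_k})$, and reading off the $(1,1)-(2,2)$ entry of the same dependence (using $z_i=p_i(z+s/x)-1/x$ and $x_i-w_i=xp_i-(z+s/x)$, which are affine in $p_i$ because $s$ is fixed) gives the extra relation $m_0'\lambda_ip_i=\sum_k m_{ik}\lambda_{j_k}p_{j_k}$. Thus each triple $(\lambda_i,\lambda_ip_i,a(\lambda_i))$ is a fixed-denominator integer combination of finitely many fixed triples, and, because $s$ is fixed, all of these triples lie on one and the same quadric coming from \eqref{alambdagamma}, namely $4a(\lambda_i)^2=4-4\lambda_i^2+\big(x(\lambda_ip_i)+(z+s/x)\lambda_i\big)^2$.

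The heart of the argument, and the step I expect to be the main obstacle, is to promote this to $\lambda_i\equiv\lambda$ on a further subsequence. The triples $(\lambda_i,\lambda_ip_i,a(\lambda_i))$ lie in a fixed finitely generated subgroup of $\mathds{R}^3$, on a single fixed quadric, with first coordinate $\lambda_i\in(0,1)$ bounded and with the rigid integrality constraint that the ratio (second coordinate)/(first coordinate) $=p_i$ is a positive integer tending to $\infty$. I would argue that these Diophantine constraints leave only finitely many possible values of $\lambda_i$, after which pigeonhole yields a constant value; this is precisely where Lemma~\ref{polynomial3} enters, to pass to a subsequence on which the determinants and resultants controlling the relevant relations—polynomials in the traces $C_i$ (equivalently the $p_i$) with nonzero leading coefficient—never vanish, so that the degenerate branches permitting $\lambda_i$ to vary are excluded. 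Proving this finiteness rigorously is the delicate point; everything else is bookkeeping.

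Finally, rationality (ii) follows from integrality inside the coset. For each $i$ the matrix $\big((g\gamma_1)^{t_1}\big)^{-1}(g\gamma_i)^{t_i}=\delta_1^{-1}\delta_i$ lies in $\Gamma$, so all four of its entries—computed from \eqref{ggammat} as explicit polynomials in $\lambda_1,a(\lambda_1),\lambda_i,a(\lambda_i)$ and the rational data $x,z,p_i,s$—are integers. Once $\lambda_i\equiv\lambda$ is known, the off-diagonal and trace combinations of these integrality conditions reduce to the assertions $\lambda^2\in\mathds{Q}$ and $\lambda\,a(\lambda)\in\mathds{Q}$; the quadratic identity \eqref{alambdagamma} with $C_i\in\mathds{Q}$ then also gives $a(\lambda)^2\in\mathds{Q}$, which completes (ii) and the proposition.
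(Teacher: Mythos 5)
Your setup (the special $\gamma$'s with constant $(1,2)$-entry and unbounded trace, Proposition~\ref{prop:fcosets} plus pigeonhole to land in one coset, and Lemmas~\ref{gamma-lin1}--\ref{gamma-lin2} to get the fixed-denominator relation $m_0'\lambda_i=\sum_k m_{ik}\lambda_{j_k}$) matches the paper's proof. But there is a genuine gap exactly where you say you expect one: the passage from that linear relation to the finiteness of $\{\lambda_i\}$. Your proposed mechanism --- that the triples $(\lambda_i,\lambda_ip_i,a(\lambda_i))$ lie in a finitely generated subgroup of $\mathds{R}^3$, on a fixed quadric, with $p_i\in\mathds{Z}^+$ --- does not by itself force finiteness: a finitely generated subgroup of $\mathds{R}^n$ (e.g.\ $\mathds{Z}+\sqrt{2}\,\mathds{Z}$ in $\mathds{R}$) need not be discrete, and the quadric constraint \eqref{alambdagamma} varies with $i$ through $C_i$, so nothing in what you wrote pins $\lambda_i$ to a discrete set. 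You name Lemma~\ref{polynomial3} as the tool but do not say what polynomial it is applied to or what non-vanishing buys you, and you explicitly defer the finiteness proof.

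The missing idea is to use the pairwise integrality $\left[(g\gamma_i)^{t_i}\right]^{-1}(g\gamma_j)^{t_j}\in\Gamma$ \emph{before} constancy is known, not only at the end for part (ii). Writing these four integrality conditions as a rational $4\times 4$ system $B(i,j)\bigl(a(\lambda_i)a(\lambda_j),\,\lambda_i a(\lambda_j),\,a(\lambda_i)\lambda_j,\,\lambda_i\lambda_j\bigr)^{T}\in\mathds{Z}^4$, one computes $\det B(i,j)$ explicitly as a polynomial in $n_i,n_j$ (equivalently the traces) with nonzero leading coefficient; Lemma~\ref{polynomial3} then yields a subsequence on which $\det B(i,j)\neq 0$ for all $i\neq j$, whence $\lambda_i\lambda_j,\ \lambda_i a(\lambda_j),\ \lambda_i^2\in\mathds{Q}$ for \emph{all} pairs. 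Only then does squaring the linear relation help: $m_0'^2\lambda_i^2=\bigl(\sum_k m_{ik}\lambda_{j_k}\bigr)^2$ is an integer combination of the finitely many fixed rationals $\lambda_{j_k}\lambda_{j_l}$, hence lies in $\frac{1}{D}\mathds{Z}$ for a fixed $D$, which is discrete; boundedness in $(0,1)$ then gives finitely many values and pigeonhole gives $\lambda_i\equiv\lambda$. Your final paragraph for (ii) also silently relies on inverting this same system, so the determinant computation cannot be postponed to the "delicate point" --- it is the engine of both the constancy and the rationality claims.
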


\begin{proof}
Let $m=g\Gamma,\,g \in \textrm{SL}(2,\mathds{Q})$, be blockable from identity $m_0$. Suppose $x=a/b,\ a,b \in \mathds{Z}$; let $C_i=z+ib$ and $p_{i}=2ib^2$, $s_{i}=(a-2a^2)i$. It is clear that $2<C_1<C_2<\cdots$ and $\textrm{tr}(g\gamma_{i})=C_i$. Note that $C_i$'s are rational numbers with the same denominator.  
By proposition 2.1. for a suitable choice of $t_{i}$'s where $0<t_{i}<1$ we should have $\{ (g\gamma_{i})^{t_{i}} \} \subset \cup_{n=1}^N \tilde{g_n}\Gamma$; passing to a subsequence if necessary it follows that there exists a sequence $\gamma_{i} \in \Gamma,\ \gamma_{i}= \begin{pmatrix} p_{i} & 1 \\
p_{i}s_{i}-1 & s_{i}  \\
\end{pmatrix}$ such that $tr(g\gamma_{i})=C_i=z+n_{i}b$, $p_{i}=2n_{i}b^2$  where $n_i \in \mathds{Z}^{+}$, $n_1<n_2<\cdots$and $(g\gamma_{i})^{t_{i}} \in \tilde{g}\Gamma$ for some fixed $\tilde{g} \in G$. Now let $\lambda_{i}=\dfrac{\sinh(t_i\omega_{\gamma_{i}})}{\sinh(\omega_{\gamma_{i}})}$ be modified times $\lambda_i \in (0,1)$. We show that for every pair of indexes $(i,j)$, $\lambda_i \lambda_j \in \mathds{Q}$. By \eqref{ggammat} we have 
$$(g\gamma_{i})^{t_{i}}= \begin{pmatrix} a(\lambda_i)+1/2 \lambda_i(x_i-w_i) & \lambda_iy_i \\
\lambda_iz_i &  a(\lambda_i)+1/2 \lambda_i(w_i-x_i) \\
\end{pmatrix}, $$ where

$$a(\lambda_i)=a(\lambda_{\gamma_i})=\left[  1+ \left( 1/4\textrm{tr}(g\gamma_i)^2-1 \right)\lambda_{i}^2   \right]^{1/2}.$$ Since $\left[ (g\gamma_i)^{t_i}\right]^{-1} \cdot \left[ (g\gamma_j)^{t_j}\right] \in \Gamma$ it follows that
\begin{equation*}
\begin{split}
 & \begin{pmatrix} a(\lambda_i)+1/2 \lambda_i(w_i-x_i) & -\lambda_iy_i \\
-\lambda_iz_i &  a(\lambda_i)+1/2 \lambda_i(x_i-w_i) \\
\end{pmatrix} \cdot \\
  & \qquad \begin{pmatrix} a(\lambda_j)+1/2 \lambda_j(x_j-w_j) & \lambda_jy_j \\
\lambda_jz_j &  a(\lambda_j)+1/2 \lambda_j(w_j-x_j) \\
\end{pmatrix} \in \Gamma
\end{split}
\end{equation*}
which can be written as 
\begin{equation}\label{A-det}
B(i,j) \begin{pmatrix}  a(\lambda_i)a(\lambda_j) \\
\lambda_i a(\lambda_j) \\
a(\lambda_i) \lambda_j \\
\lambda_i \lambda_j    \\
\end{pmatrix} \in \mathds{Z}^4
\end{equation}
where
\begin{equation*}
B(i,j)= \begin{pmatrix}
1 & 1/2(w_i-x_i) & 1/2(x_j-w_j) & 1/4(w_i-x_i)(x_j-w_j)-y_iz_j          \\
0 & -y_i & y_j   & 1/2y_j(w_i-x_i)-1/2y_i(w_j-x_j)
                             \\
0 & -z_i & z_j   & 1/2z_j(x_i-w_i)-1/2z_i(x_j-w_j)  
                             \\
1 &  1/2(x_i-w_i) & 1/2(w_j-x_j) & 1/4(x_i-w_i)(w_j-x_j)-z_iy_j           \\                                                                         
\end{pmatrix}
\end{equation*}

We claim that passing to a subsequence of $\{(g\gamma_{i})^{t_{i}}\}$ if necessary, we may assume $\det(B(i,j)) \neq 0$. Let $u_i=x_i-w_i$, then a direct but lengthy computation shows that 
\begin{equation*}
\det(B(i,j))=x\left( u_i^2z_j+u_j^2z_i-u_iu_j(z_i+z_j)-x(z_j-z_i)^2 \right)
\end{equation*}
Noting that $u_i=2xp_i-C_i=(4ab-b)n_i-z, z_i=-xp_i^2+C_ip_i-1/x=b^3(2-4a)n_i^2+2zb^2n_i-b/a$, we see that $\det(B(i,j))=-a^2b^4(2-4a)^2n_j^4+P(n_i,n_j)$ where $P$ is a third degree polynomial in $n_j$. Now Lemma \ref{polynomial3} proves the claim.

Now, from \eqref{A-det} $\lambda_i^2,\lambda_i\lambda_j,\lambda_i a(\lambda_j) \in \mathds{Q}$. Let $1 \leq n_0 \leq 4$ be the biggest integer such that there are $n_0$ $\mathds{Q}$ (or $\mathds{Z}$)-linearly independent elements of $(g\gamma_{i})^{t_{i}} \in \tilde{g}\Gamma$. Then it is clear that   $(g\gamma_{i})^{t_{i}} \in \textrm{span}_{\mathds{Q}}<(g\gamma_1)^{t_1},\cdots,(g\gamma_{n_0})^{t_{n_0}}>$, for arbitrary $i$. Lemma \ref{gamma-lin1} implies that 
\begin{equation*}
m_0 \lambda_{i}=m_1\lambda_1+\cdots+m_{n_0}\lambda_{n_0}
\end{equation*}
and since $(g\gamma_{i})^{t_{i}},(g\gamma_1)^{t_1},\cdots,(g\gamma_{n_0})^{t_{n_0}}$ all belong to the same coset, by Lemma \ref{gamma-lin2} we can assume $m_0$ is fixed and does not depend on $i$. Now, from previous step and the equation 
\begin{equation*}
m_0^2 \lambda_{i}^2=(m_1\lambda_1+\cdots+m_{n_0}\lambda_{n_0})^2
\end{equation*}
we conclude $\{ \lambda_{i} \}$ does not have any accumulation point and since $\{\lambda_{i}\} \subset (0,1)$ it follows that it's finite. Passing to a subsequence again, we may assume $\lambda_{i}=\lambda=const$.
\end{proof}

\begin{lem}\label{lem-cos}
Every coset $m \in \textrm{SL}(2,\mathds{Q})/\Gamma$ has a representative of the form 
$g= \begin{pmatrix} x & 0 \\
z & 1/x  \\
\end{pmatrix}$; that is $m=g\Gamma$, where $x,z \in \mathds{Q}$.
\end{lem}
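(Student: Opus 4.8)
The plan is to exhibit, for an arbitrary $g\in\textrm{SL}(2,\mathds{Q})$, an element $\gamma\in\Gamma=\textrm{SL}(2,\mathds{Z})$ for which the product $g\gamma$ is lower triangular; the determinant and rationality constraints will then force the stated shape automatically. Writing $g=\begin{pmatrix} a & b \\ c & d\end{pmatrix}$, the only thing I must engineer is that the $(1,2)$-entry of $g\gamma$ vanish. If the second column of $\gamma$ is $(\beta,\delta)^{T}$, this entry equals $a\beta+b\delta$, so the whole problem reduces to finding an integer column $(\beta,\delta)^{T}$ that annihilates the rational row $(a,b)$ and that can be completed to a matrix of $\textrm{SL}(2,\mathds{Z})$.

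First I would clear denominators, writing $(a,b)=\tfrac{1}{n}(a_1,b_1)$ with $a_1,b_1\in\mathds{Z}$ and $n\in\mathds{Z}^{+}$; note that $a,b$ are not both zero, since otherwise $\det g=ad-bc=0$. Setting $e=\gcd(a_1,b_1)$ and $a_1=ea_2$, $b_1=eb_2$ with $\gcd(a_2,b_2)=1$, the column $(\beta,\delta)^{T}=(b_2,-a_2)^{T}$ satisfies $a\beta+b\delta=\tfrac{e}{n}(a_2b_2-b_2a_2)=0$. Because $\gcd(b_2,-a_2)=1$, Bézout's identity supplies integers $\alpha,\gamma'$ with $-\alpha a_2-b_2\gamma'=1$, so that $\gamma=\begin{pmatrix}\alpha & b_2 \\ \gamma' & -a_2\end{pmatrix}\in\textrm{SL}(2,\mathds{Z})$ has exactly the required second column. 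With this $\gamma$ in hand, $g\gamma=\begin{pmatrix} x & 0 \\ z & w\end{pmatrix}$; all entries of $g$ and $\gamma$ are rational, so $x,z,w\in\mathds{Q}$, and $\det(g\gamma)=\det g\cdot\det\gamma=1$ forces $xw=1$, i.e. $w=1/x$. Thus $m=g\gamma\,\Gamma$ has a representative of the asserted form.

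The computation is routine; the one point demanding care is the requirement that $\gamma$ lie in $\textrm{SL}(2,\mathds{Z})$ rather than in $\textrm{GL}(2,\mathds{Z})$ or among arbitrary integer matrices. This is precisely why I reduce $(a_1,b_1)$ to a coprime pair before choosing the annihilating column: coprimality of $(b_2,-a_2)$ is what guarantees, via Bézout, that the column completes to a determinant-one integer matrix, equivalently that this vector is primitive in $\mathds{Z}^{2}$ and hence extends to a $\mathds{Z}$-basis. Conceptually, $g\Gamma$ records the rank-two lattice spanned by the columns of $g$, and right multiplication by $\Gamma$ ranges over all $\mathds{Z}$-bases of that lattice; the statement is then that every such rational lattice admits a basis whose second vector is vertical, and the coprimality step is the concrete way of selecting that primitive vertical vector. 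One should also confirm that the argument degenerates gracefully when $a_1=0$ or $b_1=0$ (yielding $(\beta,\delta)^{T}=(\pm1,0)^{T}$ or $(0,\mp1)^{T}$ respectively), but these are subsumed in the general coprime construction, so no separate case analysis is needed.
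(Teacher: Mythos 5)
Your proof is correct and follows essentially the same route as the paper: both choose the second column of $\gamma\in\textrm{SL}(2,\mathds{Z})$ to be the primitive integer vector annihilating the first row of $g$ (obtained by writing $-x/y$ in lowest terms) and complete it to a determinant-one matrix via B\'ezout. Your version is slightly more explicit about clearing denominators, the degenerate cases, and why the determinant forces the $(2,2)$-entry to equal $1/x$, but the underlying argument is identical.
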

\begin{proof}
Let $g_1= \begin{pmatrix} x & y \\
z & w  \\
\end{pmatrix}$ be an arbitrary representative of coset $m$. If $y \neq 0$, let $s/q=-x/y,\ \textrm{gcd}(s,q)=1$ and choose $p,r \in \mathds{Z}$ so that $ps-rq=1$, and let $\gamma= \begin{pmatrix} p & q \\
r & s  \\
\end{pmatrix}$. It is clear that $g=g_1\gamma \in m$ and $g$ has the desired form.
\end{proof}

\par
Now we are ready to prove Theorem \ref{thm-main}.

\begin{proof}[Proof of Theorem \ref{thm-main}]
By contrary suppose $m=g\Gamma \in \textrm{SL}(2,\mathds{Q})/\Gamma$ is blockable from identity $m_0$.
By Lemma \ref{lem-cos} we may assume: $$g=\begin{pmatrix} x & 0 \\
z & 1/x  \\ \end{pmatrix},\  x,z \in \mathds{Q}\, .$$
Let $\{ (g\gamma_{i})^{t_i}\}$ be a sequence as in proposition ~\ref{prop:lambda}, and suppose $\textrm{tr}(g\gamma_{i})=C_i=x_i/y, x_i,y \in \mathds{Z}^{+}$, and $\lambda_{i}^2=\lambda^2=k/l<1, k,l \in \mathds{Z}^{+}$. Substituting theses into \eqref{alambdagamma} it follows that
\begin{equation*}
\left( \lambda_ia(\lambda_i)\right)^2= \frac{1}{4y^2l^2}\left(4kly^2-4k^2 y^2+k^2x_i^2\right)\, .
\end{equation*}
By Proposition ~\ref{prop:lambda}, \textit{ii}), we have $\lambda_ia(\lambda_i)=\lambda a(\lambda_i)\in \mathds{Q}$, so $\left( \lambda_ia(\lambda_i)\right)^2= a_i^2/b_i^2$, for some $\ a_i,b_i \in \mathds{Z}^{+}$. 
Thus there exists $\tilde{a}_i \in \mathds{Z}^{+}$ so that
\begin{equation*}
\left( 4kl-4k^2 \right)y^2+k^2x_i^2=\tilde{a}_i^2\, ,
\end{equation*}
which can be rewritten as
\begin{equation*}
\left( 4kl-4k^2 \right)y^2=(\tilde{a}_i+k x_i)(\tilde{a}_i-k x_i)\, .
\end{equation*}
Since $k<l$, left side is a constant positive integer. Letting $x_i \rightarrow \infty$, the above equation yields a contradiction. 
\end{proof}

From above theorem it immediately follows:

\begin{cor}\label{BI}
Two elements $m_1=g_1\Gamma$ and $m_2=g_2\Gamma \in M_2$ are not blockable from each other if $g_1^{-1}g_2 \in \textrm{SL}(2,\mathds{Q})$, therefore the set of non-blackable pairs is a dense subset of $M_2\times M_2$.
\end{cor}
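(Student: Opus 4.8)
The plan is to deduce the corollary from the identity-point case already settled in the proof of Theorem~\ref{thm-main}, using the homogeneity of $M_2$ to move an arbitrary pair to $(m_0,\,g\Gamma)$, and then to upgrade the non-blockability statement to a density statement via the density of $\textrm{SL}(2,\mathds{Q})$ in $\textrm{SL}(2,\mathds{R})$. The first thing I would record is the homogeneity observation underlying Proposition~\ref{prop:genblock}\,\textit{i)}. For $h\in\textrm{SL}(2,\mathds{R})$ the left translation $L_h\colon M_2\to M_2,\ g\Gamma\mapsto hg\Gamma$, is a homeomorphism that carries connecting curves to connecting curves: since $h\cdot(\exp(tx)\cdot m)=\exp\bigl(t\,\mathrm{Ad}(h)x\bigr)\cdot(h\cdot m)$ and $\mathrm{Ad}(h)$ is a linear automorphism of $\mathfrak{sl}(2,\mathds{R})$, the assignment $c\mapsto L_h\circ c$ is a bijection of $C_{m_1,m_2}$ onto $C_{h\cdot m_1,\,h\cdot m_2}$. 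Hence $B$ blocks $(m_1,m_2)$ if and only if $hB$ blocks $(h\cdot m_1,h\cdot m_2)$, so blockability is $\textrm{SL}(2,\mathds{R})$-invariant.

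Applying this with $h=g_1^{-1}$ transports the pair $(m_1,m_2)=(g_1\Gamma,g_2\Gamma)$ to $(m_0,\,g\Gamma)$, where $m_0=\Gamma$ and $g=g_1^{-1}g_2\in\textrm{SL}(2,\mathds{Q})$. I would first note that this hypothesis is independent of the chosen coset representatives: replacing $g_i$ by $g_i\gamma_i$ with $\gamma_i\in\Gamma\subset\textrm{SL}(2,\mathds{Q})$ conjugates $g$ by rational matrices, which keeps it in $\textrm{SL}(2,\mathds{Q})$. Thus $(m_1,m_2)$ is blockable if and only if $g\Gamma$ is blockable away from $m_0$. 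But the proof of Theorem~\ref{thm-main} establishes exactly that no coset $g\Gamma$ with $g\in\textrm{SL}(2,\mathds{Q})$ is blockable from $m_0$ (a blocking set would force, through Proposition~\ref{prop:lambda}, the sequence whose trace identity is contradicted as $x_i\to\infty$). Therefore $(m_1,m_2)$ is not blockable, which is the first assertion.

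For density I would argue as follows. Left translation by $g_1$ is a homeomorphism of $M_2$, and the projection $p\colon\textrm{SL}(2,\mathds{R})\to M_2$ is continuous and open, so $p\bigl(g_1\,\textrm{SL}(2,\mathds{Q})\bigr)=L_{g_1}\bigl(p(\textrm{SL}(2,\mathds{Q}))\bigr)$ is dense in $M_2$, as $\textrm{SL}(2,\mathds{Q})$ is dense in $\textrm{SL}(2,\mathds{R})$. Given any nonempty basic open set $U_1\times U_2\subset M_2\times M_2$, I would pick $g_1\Gamma\in U_1$ and then $q\in\textrm{SL}(2,\mathds{Q})$ with $g_1q\Gamma\in U_2$; the pair $(g_1\Gamma,\,g_1q\Gamma)$ then lies in $U_1\times U_2$ and satisfies $(g_1)^{-1}(g_1q)=q\in\textrm{SL}(2,\mathds{Q})$, so by the first part it is non-blockable. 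Since every nonempty open subset of $M_2\times M_2$ meets the non-blockable set, the latter is dense.

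The only genuinely delicate point is the homogeneity step: one must verify that $L_h$ is a blocking-preserving symmetry, i.e. that it maps the entire family of connection curves bijectively onto the family at the translated pair, so that finite blocking sets correspond. This rests on the identity $h\exp(tx)h^{-1}=\exp\bigl(t\,\mathrm{Ad}(h)x\bigr)$ together with the invertibility of $\mathrm{Ad}(h)$; once this is in hand, the remainder is a formal consequence of the identity-point case of Theorem~\ref{thm-main} and the density of $\textrm{SL}(2,\mathds{Q})$.
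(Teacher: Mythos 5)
Your argument is correct and is exactly the route the paper intends when it says the corollary ``immediately follows'': left translation by $g_1^{-1}$ is the homogeneity underlying Proposition~\ref{prop:genblock}~\textit{i)}, it reduces the pair $(g_1\Gamma,g_2\Gamma)$ to $(m_0,\,g_1^{-1}g_2\Gamma)$, and the density of $\textrm{SL}(2,\mathds{Q})$ in $\textrm{SL}(2,\mathds{R})$ gives the density of non-blockable pairs just as you describe. The only nitpick is wording: replacing $g_i$ by $g_i\gamma_i$ turns $g$ into $\gamma_1^{-1}g\gamma_2$, which is two-sided multiplication by elements of $\Gamma\subset\textrm{SL}(2,\mathds{Q})$ rather than a conjugation, but the conclusion that $g$ stays rational is of course unaffected.
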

Following the proof of Proposition 9 in \cite{Connection blocking}, we prove Theorem \ref{thm-Mn}:

\begin{proof}[Proof of Theorem \ref{thm-Mn}]
For $1 \leq i \leq n-1$ let $G_i \subset \textrm{SL}(n,\mathds{R})$ be the group $\textrm{SL}(2,\mathds{R})$ embedded in $\textrm{SL}(n,\mathds{R})$ via the rows and columns $i,i+1$. Then $G_i\cap \textrm{SL}(n,\mathds{Z}) \cong \textrm{SL}(2,\mathds{Z})$, and hence $G_i\textrm{SL}(n,\mathds{Z})/\textrm{SL}(n,\mathds{Z})\cong\textrm{SL}(2,\mathds{R})/\textrm{SL}(2,\mathds{Z})$. Set $M_n^{(i)}=G_i\textrm{SL}(n,\mathds{Z})/\textrm{SL}(n,\mathds{Z}) \subset M_n$. By Theorem \ref{thm-main}, each $M_n^{(i)}$ has infinitely many non-blockable pairs $m_1,m_2$, yielding the the claim.
\end{proof}

\section{Blocking property and cocompact lattices of $\textrm{SL}(2,\mathds{R})$}
$\textrm{SL}(2,\mathds{Z})$ is the obvious example of a lattice which is not cocompact. Up to commensurability and conjugates, this is the only one that is not cocompact, Morris \cite[p.115]{Arithmetic Groups}. Since quotient spaces of the same Lie group mod conjugate or commensurable lattices have identical blocking property, we have shown for every non cocompact lattice $\Gamma$, $\textrm{SL}(2,\mathds{R})/\Gamma$ has a dense subset of points not finitely blockable from identity. 
\par
To address the cocomapct lattices, we need to know more about the structure of these lattices. There are several ways to construct cocompact lattices of $\textrm{SL}(2,\mathds{R})$. Here, we study blocking properties for a class of cocompact lattices, in $\textrm{SL}(2,\mathds{R})$, derived from quaternion algebras. We follow the notation and discussion used in Morris \cite[p.118]{Arithmetic Groups}. First we need a few preliminaries.
\begin{dfn}
\begin{enumerate}
\item For any field $F$, and any nonzero $a,b \in F$, the corresponding \textbf{quaternion algebra} over $F$ is the ring 
\begin{equation*}
\mathds{H}_F^{a,b}=\{x+yi+zj+wk\, |\, x,y,z,w \in F \},
\end{equation*}
where
\begin{itemize}
\item addition is defined in the obvious way, and 
\item multiplication is determined by the relations
$$i^2=a,\ j^2=b,\ ij=k=-ji,$$ together with the requirement that every element of $F$ is in the center of $\mathds{H}_F^{a,b}$. 
(Note that $k^2=k\cdot k=(-ji)(ij)=-ab$.)
\end{itemize}
\item The \textbf{reduced norm} of $g=x+yi+zj+wk \in \mathds{H}_F^{a,b}$ is 
$$\textrm{N}_{\textrm{red}}(g)= g\bar{g}= x^2-ay^2-bz^2+abw^2 \in F,$$
where $\bar{g}=x-yi-zj-wk$ is the \textbf{conjugate} of $g$. (Note that $\overline{gh}=\bar{g}\bar{h}$.)

\end{enumerate}
\end{dfn}
There are a few straightforward facts left to the reader to verify, for example: $\mathds{H}_F^{a^2,b} \cong \textrm{Mat}_{2\times 2}(F)$
for any nonzero $a,b \in F$, $\mathds{H}_{\mathds{C}}^{a,b} \cong \textrm{Mat}_{2\times 2}(\mathds{C})$.

We need the following proposition:

\begin{prop}
Fix positive integers $a$ and $b$, and let 
\begin{equation*}
G=\textrm{SL}(1,\mathds{H}_{\mathds{R}}^{a,b})=\{g\in \mathds{H}_{\mathds{R}}^{a,b} | \textrm{N}_{\textrm{red}}(g)=1 \}.
\end{equation*}
Then:
\begin{enumerate}[label=\roman*)]
\item $G \cong \textrm{SL}(2,\mathds{R})$,
\item $G_{\mathds{Z}}=\textrm{SL}(1,\mathds{H}_{\mathds{Z}}^{a,b})$ is an arithmetic subgroup of $G$, and 
\item the following are equivalent:
\begin{enumerate}
\item $G_{\mathds{Z}}$ is cocompact in $G$.
\item $(0,0,0,0)$ is the only integer solution $(p,q,r,s)$ of the Diophantine equation 
\begin{equation*}
w^2-ax^2-by^2+abz^2=0.
\end{equation*}
\item Every nonzero element of $\mathds{H}_{\mathds{Q}}^{a,b}$ has a multiplicative inverse (so $\mathds{H}_{\mathds{Q}}^{a,b}$ is
a "division algebra").
\end{enumerate}
\end{enumerate}
\end{prop}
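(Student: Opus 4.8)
The plan is to handle (i) and (ii) directly and to concentrate the real effort on the equivalences in (iii). For (i), since $a$ is a positive integer we have $\sqrt{a}\in\mathds{R}$, so the fact $\mathds{H}_F^{c^2,b}\cong\textrm{Mat}_{2\times 2}(F)$ recorded just before the proposition applies over $F=\mathds{R}$ with $c=\sqrt{a}$. Concretely I would build the $\mathds{R}$-algebra isomorphism $\Phi\colon\mathds{H}_{\mathds{R}}^{a,b}\to\textrm{Mat}_{2\times 2}(\mathds{R})$ determined by
\[
i\longmapsto\begin{pmatrix}\sqrt{a}&0\\0&-\sqrt{a}\end{pmatrix},\qquad j\longmapsto\begin{pmatrix}0&1\\b&0\end{pmatrix},
\]
check the relations $i^2=a$, $j^2=b$, $ij=-ji$, and compute $\det\Phi(x+yi+zj+wk)=x^2-ay^2-bz^2+abw^2=\textrm{N}_{\textrm{red}}$. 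Since $\Phi$ carries the reduced norm to the determinant, it restricts to an isomorphism of $G=\{\textrm{N}_{\textrm{red}}=1\}$ onto $\textrm{SL}(2,\mathds{R})$. For (ii) there is little to prove beyond unwinding definitions: $G$ is the group of real points of the $\mathds{Q}$-algebraic group $\mathbf{G}=\textrm{SL}(1,\mathds{H}^{a,b})$ cut out by the single polynomial equation $\textrm{N}_{\textrm{red}}=1$ in four coordinates, and $G_{\mathds{Z}}$ is exactly its group of integer points relative to the lattice $\mathds{H}_{\mathds{Z}}^{a,b}$, so it is arithmetic essentially by definition.

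The equivalence (b)$\Leftrightarrow$(c) is elementary, and I would dispatch it first. Since $g\bar{g}=\textrm{N}_{\textrm{red}}(g)$ lies in the center, a nonzero $g$ is invertible exactly when $\textrm{N}_{\textrm{red}}(g)\neq 0$, with inverse $\bar{g}/\textrm{N}_{\textrm{red}}(g)$; conversely a nonzero $g$ with $\textrm{N}_{\textrm{red}}(g)=0$ gives $g\bar{g}=0$ with $\bar{g}\neq 0$, so $g$ is a zero divisor and is not invertible. Hence (c) fails precisely when the quadratic form $x^2-ay^2-bz^2+abw^2$ has a nontrivial rational zero, and scaling to clear denominators shows this is equivalent to a nontrivial integer zero of the form in (b), which is the same quaternary form after relabeling the variables.

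The substance of the proposition is (a)$\Leftrightarrow$(c), a version of the Godement compactness criterion, and this is where I expect the main obstacle to lie. For $\neg$(c)$\Rightarrow\neg$(a) I would invoke the structural dichotomy for quaternion algebras: being a $4$-dimensional central simple algebra, $\mathds{H}_{\mathds{Q}}^{a,b}$ is by Wedderburn either a division algebra or split, $\cong\textrm{Mat}_{2\times 2}(\mathds{Q})$, so a failure of (c) makes $G_{\mathds{Z}}$ commensurable with $\textrm{SL}(2,\mathds{Z})$, which is not cocompact. For (c)$\Rightarrow$(a) I would argue via Mahler's criterion rather than quote Godement wholesale: left multiplication embeds $G/G_{\mathds{Z}}$ into the space of lattices through $gG_{\mathds{Z}}\mapsto g\cdot\mathds{H}_{\mathds{Z}}^{a,b}$ (the stabilizer of $\mathds{H}_{\mathds{Z}}^{a,b}$ in $G$ being exactly $G_{\mathds{Z}}$), and all these lattices share one covolume because left multiplication by a norm-one element has determinant $\textrm{N}_{\textrm{red}}(g)^2=1$. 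Were the quotient noncompact, Mahler's criterion would produce $g_n\in G$ and nonzero $v_n\in\mathds{H}_{\mathds{Z}}^{a,b}$ with $g_nv_n\to 0$, whence $\textrm{N}_{\textrm{red}}(g_nv_n)\to 0$ by continuity; but multiplicativity of the reduced norm gives $\textrm{N}_{\textrm{red}}(g_nv_n)=\textrm{N}_{\textrm{red}}(g_n)\textrm{N}_{\textrm{red}}(v_n)=\textrm{N}_{\textrm{red}}(v_n)$, a nonzero integer by (c), so $|\textrm{N}_{\textrm{red}}(g_nv_n)|\geq 1$, a contradiction. The delicate point throughout is this last implication: setting up the lattice picture so Mahler's criterion applies cleanly, and recognizing that the division-algebra hypothesis is precisely anisotropy of the norm form over $\mathds{Q}$, which is exactly what forbids arbitrarily short vectors in the orbit and thereby forces compactness.
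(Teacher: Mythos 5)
The paper does not actually prove this proposition; it states it and refers the reader to Morris for the proof, recording only the explicit isomorphism \eqref{quaternion-iso}. So your attempt can only be measured against the standard argument in that reference, and your route is indeed the standard one: your matrices for $i$ and $j$ reproduce exactly the isomorphism \eqref{quaternion-iso} that the paper quotes, and the identity $\det\Phi(g)=\textrm{N}_{\textrm{red}}(g)$ gives (i) (add one line that $\Phi$ is injective --- e.g.\ because $1,\Phi(i),\Phi(j),\Phi(k)$ are linearly independent over $\mathds{R}$ --- and hence surjective by dimension count). Part (ii), the equivalence (b)$\Leftrightarrow$(c) via $g\bar g=\textrm{N}_{\textrm{red}}(g)$ and clearing denominators, and the implication $\neg$(c)$\Rightarrow\neg$(a) via Wedderburn and commensurability of the resulting order with $\textrm{Mat}_{2\times 2}(\mathds{Z})$ are all correct as sketched.

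The one genuine gap is in (c)$\Rightarrow$(a). Your norm computation correctly shows that no lattice $g\cdot\mathds{H}^{a,b}_{\mathds{Z}}$, $g\in G$, contains a nonzero vector of norm less than $1$, and Mahler's criterion then says the family $\{g\cdot\mathds{H}^{a,b}_{\mathds{Z}}: g\in G\}$ is \emph{relatively compact} in the space of unimodular lattices. But that is not yet cocompactness of $G_{\mathds{Z}}$ in $G$: the sentence ``were the quotient noncompact, Mahler's criterion would produce $g_nv_n\to 0$'' silently assumes that the orbit map $gG_{\mathds{Z}}\mapsto g\cdot\mathds{H}^{a,b}_{\mathds{Z}}$ is proper, equivalently that the orbit is closed in $\textrm{SL}(4,\mathds{R})/\textrm{SL}(4,\mathds{Z})$. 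A relatively compact but non-closed orbit of a noncompact quotient is consistent with everything you have established, so the contradiction does not yet arise. This closedness is the actual content of the Godement/Borel--Harish-Chandra compactness criterion; in the present case it can be verified by hand (using that each $g\cdot\mathds{H}^{a,b}_{\mathds{Z}}$ is stable under right multiplication by $\mathds{H}^{a,b}_{\mathds{Z}}$ and that $\textrm{N}_{\textrm{red}}$ takes only integer values on it, so these properties pass to any limit lattice, which one then identifies as $h\cdot\mathds{H}^{a,b}_{\mathds{Z}}$ for some $h\in G$), but some such argument must be supplied before the Mahler step closes the proof.
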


\begin{rmk}
It is well known that the Diophantine equation $w^2-ax^2-by^2+abz^2=0$ has only trivial integer solution if and only if the equation 
$ax^2+by^2=z^2$ has only trivial integer solution \cite[p.121]{Arithmetic Groups}. This can happen if $a,b$ are prime, or if $a$ is 
not a square mod $b$, and $b$ is not a square mod $a$. Throughout the section we assume $a$ and $b$ are such integers, so the norm 
equation has only trivial solution (and thus $G_{\mathds{Z}}$ is cocompact). In particular, $a$ and $b$ can not be perfect squares.
\end{rmk}

We refer the reader to \cite[p.119]{Arithmetic Groups} for a proof. We will use the fact that the isomorphism  in \textit{i}) is given by:

\begin{equation}\label{quaternion-iso}
\phi(x+yi+zj+wk)=
\begin{pmatrix}
x+y\sqrt{a} & z+w\sqrt{a} \\
b(z-w\sqrt{a}) & x-y\sqrt{a}
\end{pmatrix}\,.
\end{equation}

Next, we discuss the exponential mapping. Let $\mathfrak{G}\cong T_1(\textrm{SL}(1,\mathds{H}_{\mathds{R}}^{a,b}))$ and $\mathfrak{sl}(2,\mathds{R})\cong T_{\textrm{Id}}\textrm{SL}(2,\mathds{R})$ be the lie algebras of $G$
and $\textrm{SL}(2,\mathds{R})$ respectively. Since $\phi$ in equation \eqref{quaternion-iso} is an isomorphism of lie groups, $\textrm{d}\phi_1: T_1(\textrm{SL}(1,\mathds{H}_{\mathds{R}}^{a,b})) \rightarrow T_{\textrm{Id}}\textrm{SL}(2,\mathds{R})$ is a Lie algebra isomorphism. Moreover, since $\textrm{SL}(1,\mathds{H}_{\mathds{R}}^{a,b})$ and $\textrm{SL}(2,\mathds{R})$ are embedded manifolds in $\mathds{R}^4$, $\textrm{d}\phi_1$ is the restriction of the corresponding differential when $\phi$ is regarded as a function from $\mathds{R}^4$ to $\mathds{R}^4$. Note that $T_1(\textrm{SL}(1,\mathds{H}_{\mathds{R}}^{a,b}))=\{(0,u_1,u_2,u_2) | u_1,u_2,u_3 \in \mathds{R} \}$, computing $\textrm{d}\phi_1$ it follows that:
\begin{equation}
\begin{pmatrix}
1 & \sqrt{a} & 0 & 0   \\
0 & 0 & 1 & \sqrt{a}   \\
0 & 0 & b & -b\sqrt{a} \\
1 & -\sqrt{a} & 0 & 0 
\end{pmatrix} \cdot 
\begin{pmatrix}
0   \\
u_1 \\
u_2 \\
u_3
\end{pmatrix} = 
\begin{pmatrix}
u_1\sqrt{a}       \\
u_2+u_3\sqrt{a}   \\
bu_2-b\sqrt{a}u_3 \\
-u_1\sqrt{a}
\end{pmatrix}
\end{equation}
Since the diagram 
\begin{equation}
\begin{tikzcd}
\mathfrak{G} \arrow{d}[left]{\exp} \arrow{r}{\textrm{d}\phi_1}
& \mathfrak{sl}(2,\mathds{R}) \arrow{d}{\exp} \\
G \arrow{r}{\phi}
& \textrm{SL}(2,\mathds{R})
\end{tikzcd}
\end{equation}
commutes Proposition \ref{prop:exponential} easily implies the following:
\begin{prop}
Let $G=\textrm{SL}(1,\mathds{H}_{\mathds{R}}^{a,b})$ and $\mathfrak{G} \cong \mathds{R}^3$ be its Lie algebra. Given $U=(u_1,u_2,u_3) \in \mathfrak{G}$, let $\omega=\sqrt{|u_1^2a+u_2^2b-u_3^2ab|}$. Then we have the following:
\begin{enumerate}[label=\roman*)]
\item $\exp(U)=\cosh\omega+\dfrac{\sinh\omega}{\omega}u_1i+\dfrac{\sinh\omega}{\omega}u_2j+\dfrac{\sinh\omega}{\omega}u_3k$,
if $u_1^2a+u_2^2b-u_3^2ab>0$,
\item $\exp(U)=1+u_1i+u_2j+u_3k$, if $u_1^2a+u_2^2b-u_3^2ab=0$, and
\item $\exp(U)=\cos\omega+\dfrac{\sin\omega}{\omega}u_1i+\dfrac{\sin\omega}{\omega}u_2j+\dfrac{\sin\omega}{\omega}u_3k$,
if $u_1^2a+u_2^2b-u_3^2ab<0$.
\end{enumerate}
\end{prop}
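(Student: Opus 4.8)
The plan is to transport the known exponential formula for $\textrm{SL}(2,\mathds{R})$ (Proposition \ref{prop:exponential}) back to $G$ through the isomorphism $\phi$, exploiting the commuting square displayed just above. Two features of $\phi$ make this immediate. First, regarded as a map $\mathds{R}^4\to\mathds{R}^4$ it is $\mathds{R}$-linear (it is an algebra isomorphism fixing the central copy of $\mathds{R}$, and this is visible directly from \eqref{quaternion-iso}), so it commutes with the affine combinations $(\cosh\omega)\,g_0+\frac{\sinh\omega}{\omega}X$ and $(\cos\omega)\,g_0+\frac{\sin\omega}{\omega}X$ appearing in Proposition \ref{prop:exponential}. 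Second, it sends the identity quaternion $1$ to $g_0=\textrm{Id}$ and the tangent quaternion $U=u_1i+u_2j+u_3k$ to the traceless matrix $\textrm{d}\phi_1(U)$. Thus, once $\exp$ is known on the $\textrm{SL}(2,\mathds{R})$ side, applying $\phi^{-1}$ recovers $\exp_G(U)$ term by term.

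First I would set $X=\textrm{d}\phi_1(U)$. Reading off the displayed computation of $\textrm{d}\phi_1$, this is the traceless matrix $X=\begin{pmatrix}\alpha & \beta\\ \gamma & -\alpha\end{pmatrix}$ with $\alpha=u_1\sqrt{a}$, $\beta=u_2+u_3\sqrt{a}$, $\gamma=b(u_2-u_3\sqrt{a})$; note this is exactly $\phi$ evaluated at $U$, obtained from \eqref{quaternion-iso} by setting $x=0,\,y=u_1,\,z=u_2,\,w=u_3$. The single computation that does all the work is
\[
\alpha^2+\beta\gamma = u_1^2a + b(u_2+u_3\sqrt{a})(u_2-u_3\sqrt{a}) = u_1^2a + u_2^2b - u_3^2ab = -\det X,
\]
which is precisely the quantity whose sign distinguishes the three cases of Proposition \ref{prop:exponential}. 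Consequently $\omega(X)=\sqrt{|\alpha^2+\beta\gamma|}=\omega$, and the trichotomy $u_1^2a+u_2^2b-u_3^2ab$ positive, zero, or negative matches the three branches verbatim.

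Finally I would apply $\phi^{-1}$ to each branch. By the commuting square, $\phi(\exp_G U)=\exp(\textrm{d}\phi_1 U)=\exp(X)$. In the case $\alpha^2+\beta\gamma>0$, Proposition \ref{prop:exponential} gives $\exp(X)=(\cosh\omega)\,g_0+\frac{\sinh\omega}{\omega}X$; since $\phi(1)=g_0$ and $\phi(U)=X$, the injectivity and linearity of $\phi$ yield $\exp_G(U)=(\cosh\omega)\,1+\frac{\sinh\omega}{\omega}U$, which is case i). The cases $\alpha^2+\beta\gamma=0$ and $\alpha^2+\beta\gamma<0$ follow identically from the corresponding branches (the $g_0+X$ branch and the $\cos/\sin$ branch, respectively). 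I do not anticipate any genuine obstacle here: the content is a transport of structure, and the only step requiring care is the algebraic identity above, which simultaneously confirms that the scalar $\omega(X)$ agrees with $\omega$ and that $\textrm{d}\phi_1$ preserves the three sign regimes.
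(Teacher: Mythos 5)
Your proposal is correct and follows exactly the route the paper intends: transport Proposition \ref{prop:exponential} through the commuting square, using that $\textrm{d}\phi_1(U)$ is the traceless matrix with $\alpha^2+\beta\gamma=u_1^2a+u_2^2b-u_3^2ab$ and that $\phi$ is linear with $\phi(1)=g_0$. The paper leaves all of this implicit ("the diagram commutes, so Proposition \ref{prop:exponential} easily implies the following"); you have simply supplied the details it omits.
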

For $g=x+yi+zj+wk \in G$ with $x>1$, $\log(g)$ is unique; let
$\omega_g=\omega(\log(g)),\, g^t=\exp{(t\log g)},\, 0 \leq t \leq 1$. The following Lemma is the counterpart to Lemma \ref{lem-per} and is stated as follows:
\begin{lem}\label{lem-per1}
Let $g=x+yi+zj+wk \in G$ with $x>1$, we have:

\begin{equation}
g^t=\left( \cosh(t\omega_g)-\dfrac{\sinh{t\omega_g}}{\sinh{\omega_g}}\cosh \omega_g \right)1+\dfrac{\sinh{t\omega_g}}{\sinh{\omega_g}}g. 
\end{equation}
\end{lem}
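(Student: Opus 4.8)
The plan is to transcribe the proof of Lemma \ref{lem-per} into the quaternionic setting, using the exponential formula of the preceding proposition in place of \eqref{eq:expb}. The key observation is that case \textit{i)} of that formula has exactly the same shape as \eqref{eq:expb}: writing the Lie-algebra element as its imaginary part $U = u_1 i + u_2 j + u_3 k$, one has $\exp(U) = (\cosh\omega)\,1 + (\sinh\omega/\omega)\,U$, with the identity $1$ playing the role of $g_0$ and $U$ the role of the traceless matrix $X$. Thus the entire computation is formally identical; only the ambient algebra changes.

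First I would record that the hypothesis $x>1$ pins down the hyperbolic case. If $g=\exp(U)$ with $U$ in case \textit{i)}, then the scalar part of $g$ equals $\cosh\omega_g$; hence $x=\cosh\omega_g>1$ forces $\omega_g>0$, so $\sinh\omega_g\neq 0$ and $\log g$ is the unique preimage asserted in the text. Indeed $x>1$ rules out both the parabolic case \textit{ii)} and the elliptic case \textit{iii)}, so the choice of formula is unambiguous.

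Next I would solve the exponential relation for $\log g$. Equating $g=x+yi+zj+wk$ with $(\cosh\omega_g)\,1+(\sinh\omega_g/\omega_g)\log g$ and isolating $\log g$ gives the quaternionic counterpart of \eqref{eq:logg},
\begin{equation*}
\log g = \frac{\omega_g}{\sinh\omega_g}\bigl(g-\cosh(\omega_g)\,1\bigr).
\end{equation*}
Here I would also note the homogeneity $\omega(t\log g)=t\,\omega_g$ for $0\le t\le 1$, which is immediate from the definition $\omega=\sqrt{|u_1^2a+u_2^2b-u_3^2ab|}$ since each $u_i$ scales by $t$.

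Finally I would substitute into $g^t=\exp(t\log g)=\cosh(t\omega_g)\,1+\bigl(\sinh(t\omega_g)/(t\omega_g)\bigr)(t\log g)$. The factors of $t$ and $\omega_g$ cancel against the coefficient of $\log g$ above, and collecting the coefficient of $1$ yields the stated identity. I do not expect any genuine obstacle: the argument is a routine line-by-line adaptation of Lemma \ref{lem-per}, and the only point requiring care is confirming that the hypothesis $x>1$ selects case \textit{i)}, so that the hyperbolic formula and the uniqueness of $\log g$ may be invoked.
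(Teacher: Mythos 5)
Your proposal is correct and is exactly what the paper intends: its proof of this lemma is the one-line instruction ``Follow the steps of Lemma \ref{lem-per},'' and you carry out precisely that transcription, correctly noting that $x=\cosh\omega_g>1$ selects the hyperbolic case and that $\omega(t\log g)=t\,\omega_g$. No gaps.
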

\begin{proof}
Follow the steps of Lemma \ref{lem-per}.
\end{proof}
Let $\Gamma=\textrm{SL}(1,\mathds{H}_{\mathds{Z}}^{a,b})$ be a cocompact lattice. Following notations of Section 2, for a fixed $g$ and an arbitrary $\gamma \in \Gamma$, $\lambda_{\gamma}=\dfrac{\sinh (t\omega_{g\gamma})}{\sinh (\omega_{g\gamma})},\ 0\ \leq \lambda_{\gamma} \leq 1$ is the modified time. Through similar step we can easily conclude:

\begin{equation}\label{ggammat-co}
(g\gamma)^t=\left[ a(\lambda_{\gamma})-x\lambda_{\gamma} \right]1+\lambda_{\gamma}g\gamma, 
\end{equation} where 
\begin{equation}\label{alambda-co}
a(\lambda_{\gamma})=\left( 1+ \left(x^2-1 \right)\lambda_{\gamma}^2  \right)^{1/2}\, .
\end{equation}
To follow through the proof of Proposition~\ref{prop:lambda} for co-compact lattices we only  consider elements $g=x+yi \in \textrm{SL}(1,\mathds{H}_{\mathds{Q}}^{a,b})$. For a sequence $\{\gamma_i\} \subset \Gamma$ let $g\gamma_i=x_i+y_ii+z_ij+w_ik$. We need the following lemma.
\begin{lem}\label{gamma-i-co}
Let $g=x+yi \in \textrm{SL}(1,\mathds{H}_{\mathds{Q}}^{a,b})$. There exists a sequence $\gamma_i=p_i+q_ii+r_ij+s_ik \in \Gamma$, such that $z_i$ and $w_i$ in $g\gamma_i$, are fixed for all $i$, $z_i^2-aw_i^2 \neq 0$, and $x_i \rightarrow \infty$, as $i \rightarrow \infty$.
\end{lem}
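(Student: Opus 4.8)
The plan is to exploit the commutative subring $L=\mathds{Q}+\mathds{Q}i\subset \mathds{H}_{\mathds{Q}}^{a,b}$ generated by $i$. Since $i^2=a$ and $a$ is not a perfect square, $L\cong\mathds{Q}(\sqrt a)$ is a real quadratic field, and every quaternion can be written in the ``complex'' form $\alpha+\beta j$ with $\alpha,\beta\in L$. First I would record the two algebraic facts that make this decomposition useful: for $\mu\in L$ one has $j\mu=\bar\mu\,j$ (where $\overline{p+qi}=p-qi$ is conjugation in $L$), and consequently $\textrm{N}_{\textrm{red}}(\alpha+\beta j)=\alpha\bar\alpha-b\,\beta\bar\beta$. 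Writing $g=x+yi$ as an element $\alpha_g\in L$ with $\alpha_g\bar\alpha_g=x^2-ay^2=1$, and $\gamma=\alpha+\beta j$, one computes $g\gamma=\alpha_g\alpha+(\alpha_g\beta)j$; hence the $j,k$-part of $g\gamma$ is governed entirely by the single element $\alpha_g\beta\in L$, so if $\beta$ is held fixed then $z_i,w_i$ are automatically fixed. Moreover, writing the $L$-norm as $\textrm{N}_L(\mu)=\mu\bar\mu$ and $\alpha_g\beta=z+wi$, we get $z^2-aw^2=\textrm{N}_L(\alpha_g\beta)=\textrm{N}_L(g)\,\textrm{N}_L(\beta)=r^2-as^2$, which is nonzero whenever $\beta=r+si\neq0$, again because $\sqrt a$ is irrational.

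The crux is to produce one element $\gamma_0=\alpha_0+\beta j\in\Gamma$ with $\beta\neq0$, together with a mechanism for varying $\alpha$ while keeping $\beta$ fixed. For the first point I would observe that the elements of $\Gamma$ with $\beta=0$ form exactly the group $\Gamma\cap L$ of norm-one units of the order $\mathds{Z}[i]\cong\mathds{Z}[\sqrt a]$; by classical Pell theory these solutions of $p^2-aq^2=1$ form a virtually cyclic group. On the other hand $\Gamma$ is a cocompact lattice in $G\cong\textrm{SL}(2,\mathds{R})$, hence a cocompact Fuchsian group, which is \emph{not} virtually cyclic. Therefore $\Gamma\cap L\subsetneq\Gamma$ and some $\gamma_0\in\Gamma$ must have $\beta\neq0$. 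For the second point, let $\epsilon=u+vi\in L$ be a nontrivial norm-one unit with $u,v>0$ (a Pell solution $u^2-av^2=1$, which exists since $a\geq2$ is a non-square), so $\epsilon\in\Gamma$ and its real place satisfies $\epsilon\mapsto u+v\sqrt a>1$. Using $\epsilon\bar\epsilon=1$ and $j\epsilon^{\,n}=\bar\epsilon^{\,n}j$, a direct computation yields the key identity
\begin{equation*}
\epsilon^{\,n}\gamma_0\,\epsilon^{\,n}=\epsilon^{\,2n}\alpha_0+\beta j\in\Gamma,
\end{equation*}
which fixes $\beta$ and multiplies $\alpha_0$ by the expanding unit $\epsilon^{\,2n}$.

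I would then set $\gamma_i:=\epsilon^{\,n_i}\gamma_0\,\epsilon^{\,n_i}$ for a strictly increasing sequence $n_i$, so that $\alpha_i=\epsilon^{\,2n_i}\alpha_0$ while $\beta$ is constant. By the first paragraph this forces $z_i,w_i$ to be constant with $z_i^2-aw_i^2=r^2-as^2\neq0$. To see $x_i\to+\infty$, note that $x_i$ is the scalar part of $g\gamma_i$, i.e. $x_i=\tfrac12(\mu_i+\bar\mu_i)$ with $\mu_i=\alpha_g\epsilon^{\,2n_i}\alpha_0\in L$. Applying the two real embeddings of $L$ (sending $i\mapsto\pm\sqrt a$), the image under $i\mapsto+\sqrt a$ grows like $\sigma(\alpha_g\alpha_0)\,(u+v\sqrt a)^{2n_i}$, which tends to $\pm\infty$, while the conjugate embedding tends to $0$; hence $|x_i|\to\infty$ with constant sign. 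Replacing $\gamma_0$ by $-\gamma_0\in\Gamma$ if necessary (which negates $\alpha_0$ and $\beta$ but changes neither $r^2-as^2$ nor the fixedness of $\beta$) flips this sign, giving $x_i\to+\infty$ as required.

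The step I expect to be the main obstacle is the existence of $\gamma_0$ with $\beta\neq0$: it is the one place where cocompactness, equivalently that $\mathds{H}_{\mathds{Q}}^{a,b}$ is a division algebra, is genuinely used, and it must be invoked to exclude the degenerate scenario in which $\Gamma$ is exhausted by the Pell units lying inside $L$. Once that is secured, everything else is a formal manipulation in the quaternion order together with the standard hyperbolic-versus-bounded behavior of the two archimedean places of $L$.
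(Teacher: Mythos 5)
Your proof is correct, and at bottom it runs on the same engine as the paper's: hold the $j,k$-part of $\gamma$ fixed and vary its $\mathds{Q}(i)$-part through infinitely many elements of the same norm, which is exactly the statement that the generalized Pell equation $p^2-aq^2=n$ has infinitely many solutions once it has one. The differences are in packaging and in rigor. The paper picks an ``arbitrary'' $\gamma_1=p_1+q_1i+r_1j+s_1k\in\Gamma$, asserts $r_1^2-as_1^2\neq 0$ because $a$ is not a square, and then cites the Pell fact to get $(p_i,q_i)$ with $p_i^2-aq_i^2=p_1^2-aq_1^2$, setting $\gamma_i=p_i+q_ii+r_1j+s_1k$; your version realizes essentially the same family as $\epsilon^{n_i}\gamma_0\epsilon^{n_i}=\epsilon^{2n_i}\alpha_0+\beta j$ inside $L=\mathds{Q}+\mathds{Q}i\cong\mathds{Q}(\sqrt a)$, which makes membership in $\Gamma$, the constancy of $z_i,w_i$, and the growth $x_i\to+\infty$ (via the two real places of $L$) completely transparent. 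More importantly, the step you flag as the main obstacle is not redundant: the paper's claim $r_1^2-as_1^2\neq 0$ is simply false for an arbitrary $\gamma_1$ (take $\gamma_1=1$), so one genuinely needs an element with nonzero $j,k$-part, and your argument --- the norm-one units of $\mathds{Z}[\sqrt a]$ form a virtually cyclic group while a cocompact lattice in $\textrm{SL}(2,\mathds{R})$ is not virtually cyclic --- closes a gap the paper leaves open (a softer variant: if $\Gamma\subset L$ then $\Gamma$ sits in a one-dimensional subgroup of the three-dimensional $G$ and cannot be a lattice). The only loose end on your side is the implicit assumption $\alpha_0\neq 0$, needed so that $\sigma(\alpha_g\alpha_0)\neq 0$ and hence $|x_i|\to\infty$; this is automatic, since $\alpha_0=0$ would force $\textrm{N}_{\textrm{red}}(\gamma_0)=-b(r^2-as^2)=1$ with $r,s\in\mathds{Z}$ and $b\geq 2$ (recall $b$ is not a perfect square), which is impossible.
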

\begin{proof}
Fix an arbitrary element $\gamma_1=p_1+q_1i+r_1j+s_1k \in \Gamma$. Since $a$ is not a perfect square, $r_1^2-as_1^2 \neq 0$. Let $n=p_1^2-aq_1^2$. It is well known that if the Pell's equation $p^2-aq^2=n$ has one solution (and $a$ is not a perfect square), it has infinitely many solutions. Let ${(p_i,q_i)} \in \mathds{Z}^2$ be an infinite set of distinct solutions such that $xp_i,\ yq_i >0$, and let $\gamma_i=p_i+q_ii+r_1j+s_1k$. Then it is easily seen $z_i=xr_1+ays_1, w_i=xs_1+yr_1$ are fixed, $z_i^2-aw_i^2=(x^2-ay^2)(r_1^2-as_1^2)\neq 0$, and $x_i=xp_i+ayq_i \rightarrow \infty$ as
$i \rightarrow \infty$. 
\end{proof}
It can be easily seen Lemma \ref{gamma-lin2} is valid for the co-compact lattices $\Gamma$, if we think of elements of $\Gamma$ as two by two matrices with integer entries. The following proposition is the counterpart to Proposition \ref{prop:lambda} for co-compact lattices.

\begin{prop}\label{prop:lambda2}
Let $g=x+yi \in \textrm{SL}(1,\mathds{H}_{\mathds{Q}}^{a,b})$. $m=g\Gamma$, is finitely blockable from identity $m_0$, then there exists a sequence $\gamma_i=p_i+q_ii+r_ij+s_ik$ and a sequence of times $\{ t_{i} \} \subset (0,1)$ such that 
\begin{enumerate}[label=\roman*)]

\item all elements of $\{ (g\gamma_{i})^{t_i}\}$ belong to the same coset, and all modified times are the same, i.e., $\lambda_{i}=\lambda=\textrm{const}$,

\item $\lambda_{i}^2,\lambda_{i} a(\lambda_{i}) \in \mathds{Q}$,

\item $x_i=\textrm{Re}(g\gamma_{i})$ is an increasing sequence of positive rational numbers with the same denominator, $x_i \rightarrow \infty$, as $i \rightarrow \infty$, and

\item $\{p_{i} \}$ is an increasing sequence of positive integers. 

\end{enumerate}
\end{prop}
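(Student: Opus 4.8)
The plan is to follow the proof of Proposition~\ref{prop:lambda} almost line by line, replacing the $2\times 2$ matrix algebra with quaternion arithmetic and using \eqref{ggammat-co}--\eqref{alambda-co} in place of \eqref{ggammat}--\eqref{alambdagamma}. Assume $m=g\Gamma$ is blockable from $m_0$, and apply Lemma~\ref{gamma-i-co} to obtain the sequence $\gamma_i=p_i+q_ii+r_1j+s_1k$, so that $g\gamma_i=x_i+y_ii+zj+wk$ with $z,w$ fixed, $z^2-aw^2\neq 0$, and $x_i=xp_i+ayq_i\to\infty$. Since $x,y,z,w\in\mathds{Q}$, both $x_i$ and $y_i=xq_i+yp_i$ are rationals whose denominators divide a fixed integer, and (we may assume $x>0$, as $g$ and $-g$ share blocking) one checks that $y_i$ has constant sign; passing to a subsequence I may therefore assume $x_i$ and $p_i$ are strictly increasing and positive with the $x_i$ sharing a common denominator, which gives \textit{iii)} and \textit{iv)}. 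Feeding the blockable point $m$ into Proposition~\ref{prop:fcosets} produces times $t_i\in(0,1)$ with $\{(g\gamma_i)^{t_i}\}$ lying in finitely many $\Gamma$-cosets; a further subsequence places them all in a single coset $\tilde g\Gamma$, giving the coset half of \textit{i)}.

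The core of the argument is \textit{ii)}. By \eqref{ggammat-co}, written with $x_i=\mathrm{Re}(g\gamma_i)$, the real part collapses and
\begin{equation*}
(g\gamma_i)^{t_i}=a(\lambda_i)+\lambda_i y_i\,i+\lambda_i z\,j+\lambda_i w\,k .
\end{equation*}
As these elements lie in $\tilde g\Gamma$ and have reduced norm $1$, the quaternion $\overline{(g\gamma_i)^{t_i}}\,(g\gamma_j)^{t_j}$ equals an element of $\Gamma$, hence has integer coordinates. Expanding it with $i^2=a,\ j^2=b,\ k^2=-ab,\ ij=k=-ji$, each of its four coordinates is a $\mathds{Q}$-linear combination (with coefficients built from $y_i,y_j,z,w$) of the four products $a(\lambda_i)a(\lambda_j),\ a(\lambda_i)\lambda_j,\ \lambda_i a(\lambda_j),\ \lambda_i\lambda_j$. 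This is the analogue of \eqref{A-det}: a rational matrix $B(i,j)$ applied to the vector of these products lies in $\mathds{Z}^4$. A direct computation should yield
\begin{equation*}
\det B(i,j)=-(z^2-aw^2)(y_i-y_j)^2 ,
\end{equation*}
which is nonzero for every $i\neq j$, since $z^2-aw^2\neq 0$ and the $y_i$ are distinct. Pleasingly, this is cleaner than the non-cocompact case: the determinant factors outright, so Lemma~\ref{polynomial3} is not needed. Inverting $B(i,j)$ forces all four products into $\mathds{Q}$; taking ratios across three indices gives $\lambda_i^2\in\mathds{Q}$ and then $\lambda_i a(\lambda_i)\in\mathds{Q}$, which is \textit{ii)}.

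To finish \textit{i)} I would prove the quaternionic analogue of Lemma~\ref{gamma-lin1}: a relation $\sum_i m_i(g\gamma_i)^{t_i}=0$ with $m_i\in\mathds{Z}$ forces $\sum_i m_i a(\lambda_i)=0$ from the real coordinate and $\sum_i m_i\lambda_i=0$ from the $j$- or $k$-coordinate (one of $z,w$ is nonzero because $z^2-aw^2\neq 0$). Letting $n_0\le 4$ be the maximal number of $\mathds{Q}$-independent elements among $\{(g\gamma_i)^{t_i}\}\subset\tilde g\Gamma$ and invoking Lemma~\ref{gamma-lin2}, which as noted above remains valid for these cocompact lattices, I get $m_0\lambda_i=m_1\lambda_1+\cdots+m_{n_0}\lambda_{n_0}$ with $m_0$ fixed. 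Multiplying by $\lambda_1$ and using $\lambda_k\lambda_1\in\mathds{Q}$ confines $\lambda_i\lambda_1$ to a finitely generated, hence cyclic, subgroup of $\mathds{Q}$; being bounded in $(0,\lambda_1)$ it takes finitely many values, so $\{\lambda_i\}$ is finite and a final subsequence gives $\lambda_i=\lambda=\mathrm{const}$. I expect the main obstacle to be the determinant computation: expanding $\overline{(g\gamma_i)^{t_i}}(g\gamma_j)^{t_j}$ and confirming the clean factorization above is the one genuinely new calculation, with the constant-sign and distinctness control of the $y_i$ a short but necessary supporting check.
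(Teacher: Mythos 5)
Your proposal is correct and follows essentially the same route as the paper: Lemma~\ref{gamma-i-co} for the sequence, Proposition~\ref{prop:fcosets} plus a subsequence for the single coset, the matrix $B(i,j)$ acting on the vector of products $a(\lambda_i)a(\lambda_j),\dots,\lambda_i\lambda_j$ with determinant nonvanishing because $z^2-aw^2\neq 0$, and then Lemma~\ref{gamma-lin2} applied to the $j$- (or $k$-) coordinate to force $\lambda_i$ into a discrete subset of $(0,1)$. The only discrepancies are cosmetic: the determinant works out to $(z^2-aw^2)(y_j-y_i)^2$ rather than its negative, and your observation that the clean factorization makes Lemma~\ref{polynomial3} unnecessary (given distinctness of the $y_i$) is a slight simplification of the paper's argument, which still formally invokes that lemma.
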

\begin{proof}
Let $m=g\Gamma$, be blockable from identity $m_0$. Let $\{\gamma_i\}$ be a sequence as in Lemma \ref{gamma-i-co}. Then $x_i=\textrm{Re}(x_i)$ is an increasing sequence of rational numbers with the same denominator, and $x_i \rightarrow \infty$, as $i \rightarrow \infty$.
By proposition \ref{prop:fcosets} for a suitable choice of $t_{i}$'s where $0<t_{i}<1$ we should have $\{ (g\gamma_{i})^{t_{i}} \} \subset \cup_{n=1}^N \tilde{g_n}\Gamma$; passing to a subsequence if necessary, we may assume $(g\gamma_{i})^{t_{i}} \in \tilde{g}\Gamma$ for some fixed $\tilde{g} \in G$. 

Now let $\lambda_{i}=\dfrac{\sinh(t_i\omega_{\gamma_{i}})}{\sinh(\omega_{\gamma_{i}})}$ be modified times $\lambda_i \in (0,1)$. We show that for every pair of indexes $(i,j)$, $\lambda_i \lambda_j \in \mathds{Q}$. 

By \eqref{ggammat-co} and \eqref{alambda-co} we have 
$$ (g\gamma_i)^{t_i}=a(\lambda_{i})+\lambda_{i}(y_ii+z_ij+w_ik) $$ where
$$a(\lambda_{i})=\left( 1+ \left(x^2-1 \right)\lambda_{i}^2  \right)^{1/2}\, .$$

Since $\left[ (g\gamma_i)^{t_i}\right]^{-1} \cdot \left[ (g\gamma_j)^{t_j}\right] \in \Gamma$ it follows that
\begin{equation*}
\left( a(\lambda_{i})-\lambda_{i}(y_ii+z_ij+w_ik) \right) \cdot \left( a(\lambda_{j})+\lambda_{j}(y_ji+z_jj+w_jk)\right) \in \Gamma
\end{equation*}
which can be written as 
\begin{equation}\label{A-det}
B(i,j) \begin{pmatrix}  a(\lambda_i)a(\lambda_j) \\
\lambda_i a(\lambda_j) \\
a(\lambda_i) \lambda_j \\
\lambda_i \lambda_j    \\
\end{pmatrix} \in \mathds{Z}^4
\end{equation}
where
\begin{equation*}
B(i,j)= \begin{pmatrix}
1 & 0    & 0     & w_iw_jab-y_iy_ja-z_iz_jb   \\
0 & -y_i & y_j   & (z_iw_j-w_iz_j)b
                             \\
0 & -z_i & z_j   & (w_iy_j-y_iw_j)a  
                             \\
0 & -w_i & w_j   & z_iy_j-y_iz_j             \\                                                                         
\end{pmatrix}
\end{equation*}

We claim that passing to a subsequence of $\{g\gamma_{i}^{t_{i}}\}$ if necessary, we may assume $\det(B(i,j)) \neq 0$. A direct computation shows that 
\begin{equation*}
\det(B(i,j))=-(w_iy_j-y_iw_j)^2a-(w_iz_j-z_iw_j)^2b+(z_iy_j-y_iz_j)^2
\end{equation*}

Note that $\det(B(i,j))$ is a second degree polynomial in $y_j$ (the coefficient of $y_j^2$ is $z_i^2-aw_i^2 \neq 0$); so by Lemma \ref{polynomial3} and passing to a subsequence if necessary, we may assume  $\det(B(i,j)) \neq 0$.
\par
Now, from \eqref{A-det} $\lambda_i^2,\lambda_i\lambda_j,\lambda_i a(\lambda_j) \in \mathds{Q}$. Let $1 \leq n_0 \leq 4$ be the biggest integer such that there are $n_0$ $\mathds{Q}$ (or $\mathds{Z}$)-linearly independent elements of $(g\gamma_{i})^{t_{i}} \in \tilde{g}\Gamma$. Then it is clear that   $(g\gamma_{i})^{t_{i}} \in \textrm{span}_{\mathds{Q}}<(g\gamma_1)^{t_1},\cdots,(g\gamma_{n_0})^{t_{n_0}}>$, for arbitrary $i$ which implies, considering the $z$-component, 
\begin{equation*}
m_0 \lambda_{i} z_i=m_1\lambda_1 z_1+\cdots+m_{n_0}\lambda_{n_0} z_{n_0}\, .
\end{equation*}
By Lemma \ref{gamma-i-co} $z_i$ is fixed and since $(g\gamma_{i})^{t_{i}},(g\gamma_1)^{t_1},\cdots,(g\gamma_{n_0})^{t_{n_0}}$ all belong to the same coset, by Lemma \ref{gamma-lin2} we can assume $m_0$ is also fixed and does not depend on $i$. Now, from previous step and the equation 
\begin{equation*}
(m_0 z_i)^2 \lambda_{i}^2=(m_1\lambda_1 z_1+\cdots+m_{n_0}\lambda_{n_0} z_{n_0})^2
\end{equation*}
we conclude $\{ \lambda_{i} \}$ does not have any accumulation point and since $\{\lambda_{i}\} \subset (0,1)$ it follows that it's finite. Passing to a subsequence again, we may assume $\lambda_{i}=\lambda=const$.

\end{proof}

\begin{proof}[Proof of Theorem \ref{thm-mainco}]

The proof is quite similar to proof of Theorem \ref{thm-main}, just replace $C_i$ with $2x_i$, and  Proposition \ref{prop:lambda} with Proposition \ref{prop:lambda2}.

\end{proof}

\bibliography{aomsample}
\bibliographystyle{aomalpha}

\newpage
\end{document}